\newtheorem{thm}{Theorem}[section]
\newtheorem{corollary}[thm]{Corollary}
\newtheorem{lemma}[thm]{Lemma}
\newcommand{\bx}{\mathbf{x}}
\newcommand{\by}{\mathbf{y}}
\newcommand{\trace}{\mathrm{Tr}}
\newcommand{\MM}{\mathcal{M}}
\begin{document}

\begin{frontmatter}

\title{A GENERALIZED LIEB'S THEOREM \\ AND ITS APPLICATIONS TO SPECTRUM ESTIMATES \\ FOR A SUM OF RANDOM MATRICES}
\author{De Huang\fnref{myfootnote} }
\address{Applied and Computational Mathematics, California Institute of Technology, Pasadena, CA 91125, USA}
\fntext[myfootnote]{E-mail address:\ dhuang@caltech.edu.}
 
\begin{abstract}

In this paper we prove the concavity of the $k$-trace functions, $A\mapsto (\trace_k[\exp(H+\ln A)])^{1/k}$, on the convex cone of all positive definite matrices. $\trace_k[A]$ denotes the $k_{\mathrm{th}}$ elementary symmetric polynomial of the eigenvalues of $A$. As an application, we use the concavity of these $k$-trace functions to derive tail bounds and expectation estimates on the sum of the $k$ largest (or smallest) eigenvalues of a sum of random matrices.

\end{abstract}

\begin{keyword}
trace inequality, mixed discriminants, concavity of matrix functions, exterior algebra, random matrices, spectrum estimates.
\MSC[2010] 15A75, 15A15, 15A16, 15A42.
\end{keyword}

\end{frontmatter}

\section{Introduction}

Trace functions and trace inequalities have drawn great interests and are extremely useful in many fields, especially in quantum information theories \cite{guhr1998random,doi:10.1063/1.1666274,petz2007quantum}. In many related research, the concavity (or convexity) of some trace functions is one of the most studied topics. One celebrated achievement in this area is the Lieb's concavity theorem proved by Lieb \cite{LIEB1973267}, which states that the function
\begin{equation}
A\longmapsto \trace\big[K^*A^pKA^q\big], \quad p,q\geq 0, p+q\leq 1,
\end{equation}
is concave on the convex cone of all $n\times n$ Hermitian, positive semi-definite matrices, for arbitrary $K$ of the same size. Here $K^*$ is the conjugate adjoint of $K$. As an application, Lieb and Ruskai \cite{doi:10.1063/1.1666274} used the Lieb's concavity theorem to prove the strong subadditivity of quantum entropy. 

Among rich consequences of the Lieb's concavity theorem, a deep equivalent result, also established by Lieb \cite{LIEB1973267} and known as the Lieb's Theorem, is the concavity of the function
\begin{equation}
A\longmapsto \trace\big[\exp(H+\ln A)\big]
\end{equation}
on the convex cone of all $n\times n$ Hermitian, positive definite matrices, for arbitrary Hermitian matrix $H$ of the same size. Later, alternative proofs of the Lieb's Theorem (Carlen \cite{carlen2010trace}, Tropp \cite{MAL-048}) revealed its deep connections with quantum entropy and matrix tensors. With the help of the Lieb's theorem, Tropp \cite{Tropp2012,MAL-048} derived multiple important, user-friendly estimates, e.g. matrix master bounds and eigenvalue Chernoff bounds, that characterize the expectation and tail behaviors of extreme eigenvalues of random matrices of the form $Y=\sum_{i=1}^mX^{(i)}$, where $\{X^{(i)}\}_{1\leq i\leq m}$ is a finite sequence of independent, random, Hermitian matrices of the same size. Tropp et al. \cite{gittens2011tail} improved these results to interior eigenvalues by making use of the Courant--Fischer characterization of eigenvalues.

These estimates provide rich theoretical supports for studies and developments in stochastic models and algorithms for random matrices \cite{mehta2004random,wigner1993characteristic} in fields ranging from quantum physics \cite{guhr1998random} to financial statistics \cite{laloux2000random,plerou2002random}. A typical example is the study of clustering of random graphs \cite{Aiello:2000:RGM:335305.335326,1959} arising from research on social networks \cite{10.1007/978-3-540-77004-6_5}, image classification \cite{belkin2004regularization,bruna2013spectral} and so on. By spectral theory, the number of zero eigenvalues of the Laplacian of a graph indicates the number of connected components in the graph. A relaxed version is that the number of eigenvalues close to zero of the Laplacian of a graph indicates the number of major clusters in the graph. Based on this, many researchers have developed clustering methods by investigating the spectrum of graph Laplacians. When the graph is extremely large, the use of random sparsification or sampling is then critically necessary \cite{spielman2011graph,spielman2011spectral}. The practicability of these random approaches is guaranteed by expectation estimates and tail bounds of eigenvalues of random matrices as those in \cite{Tropp2012,MAL-048,gittens2011tail}. 

In many cases of interest the number of clusters is assumed \cite{chaudhuri2012spectral,NIPS2013_5099,rohe2011spectral} and so one may want to simultaneously study the behaviors of the $k$ smallest eigenvalues of the Laplacian of a random graph. Then a natural question is, can we generalize Tropp's estimates from the largest (or smallest) eigenvalue to the sum of the $k$ largest (or smallest) eigenvalues? Revisiting Tropp's proof of the master bounds in \cite{MAL-048}, we can see that this desired generalization actually requires a generalized version of the Lieb's theorem that the function 
\begin{equation}
\label{eqt:functions}
A\longmapsto \big(\trace_k\big[\exp(H+\ln A)\big]\big)^\frac{1}{k},\ \text{or equivalently}\ A\longmapsto \ln\trace_k\big[\exp(H+\ln A)\big],
\end{equation}
is concave on the convex cone of all $n\times n$ Hermitian, positive definite matrices, for arbitrary Hermitian matrix $H$ of the same size. Here $\trace_k(A)$ denotes the $k_{\mathrm{th}}$ elementary symmetric polynomial of the eigenvalues of $A$. Our main task of this paper is to prove this generalized Lieb's theorem.

The symmetric forms of eigenvalues in the functions \eqref{eqt:functions} bring our attention to theories of multilinear, symmetric forms of matrices. In particular, we will develop the proof by expressing the functions \eqref{eqt:functions} in terms of mixed discriminants or trace functions in exterior algebras. Furthermore, an essential step in our proof is due to the Alexandrov-Fenchel inequality for mixed discriminants, i.e.
\begin{equation}
D(A,B,A^{(3)},\cdots,A^{(n)})^2\geq D(A,A,A^{(3)},\cdots,A^{(n)})D(B,B,A^{(3)},\cdots,A^{(n)}),
\end{equation}
for any Hermitian matrix $B$ and any Hermitian, positive definite matrices $A,A^{(3)},\cdots,A^{(n)}$. The original Alexandrov-Fenchel inequality for mixed volumes of convex bodies, due to Alexandrov \cite{aleksandrov1937theory} and Fenchel \cite{fenchel1936inegalites} independently, is one of the deepest results in convex geometry. Alexandrov \cite{aleksandrov1938theory} then introduced
the notion of mixed discriminants of matrices and proved a variant of the Alexandrov-Fenchel inequality for mixed discriminants. This inequality was overlooked for a long time until it was applied to prove the Van der Waerden’s conjecture by Egorychev \cite{egorychev1981solution}. To see how our proof of the generalized Lieb's theorem may rely on the Alexandrov-Fenchel inequality, one can consider an extreme case by taking $H=0$. Then the concavity of $A\mapsto\big(\trace_k\big[A\big]\big)^\frac{1}{k}$ is due to the general Brunn-Minkowski theorem \cite{schneider2014convex}, which is a direct consequence of the Alexandrov-Fenchel inequality for mixed discriminants.

\subsection*{Outline} The rest of this paper is organized as follows. In \Cref{sec:Notations and main results}, we will introduce some notations and our main results. As preparation, we will review and discuss some basics and relevant results on mixed discriminants, exterior algebra and derivatives of matrix functions in \Cref{sec:Preparation}. \Cref{sec:Proofofmainthm} is devoted to the proofs of some important lemmas and the generalized Lieb's theorem (\Cref{thm:GeneralizedLiebThm}). In \Cref{sec:Application}, we will apply the generalized Lieb's theorem to derive expectation estimates and tail bounds for the sum of the k largest (or smallest) eigenvalues of a sum of random matrices, and compare these results to previous related works. 
 
\section{Notations and main results}
\label{sec:Notations and main results}
\subsection{General conventions}
For any positive integer $n$, we write $\mathbb{C}^n$ for the $n$-dimensional complex vector spaces equipped with the standard $l_2$ inner products, and $\mathbb{C}^{n\times n}$ for the space of all complex matrices of size $n\times n$. Let $\mathcal{H}_n$ be the linear space of all $n\times n$ Hermitian matrices, $\mathcal{H}_n^+$ be the convex cone of all $n\times n$ Hermitian, positive semi-definite matrices, and $\mathcal{H}_n^{++}$ be the convex cone of all $n\times n$ Hermitian, positive definite matrices. For any matrix $A\in \mathcal{H}_n$, we denote by $\lambda_i(A)$ the $i_{\text{th}}$ largest eigenvalue of $A$. We write $\bf{0}$ for square zero matrices (or operators) of suitable size according to the context, and $I_n$ for the identity matrix of size $n \times n$.

For any matrix $A\in\mathbb{C}^{n\times n}$ with eigenvalues $\lambda_1,\lambda_2,\cdots,\lambda_n$, we define the $k$-trace of $A$ to be 
\begin{equation}
\label{def:k-trace}
\trace_k[A] = \sum_{1\leq i_1<i_2<\cdots<i_k\leq n}\lambda_{i_1}\lambda_{i_2}\cdots\lambda_{i_k},\qquad 1\leq k\leq n.
\end{equation}
In particular, $\trace_1[A]=\trace[A]$ is the normal trace of $A$, and $\trace_n[A]=\det[A]$ is the determinant of $A$. If we write $A_{(i_1\cdots i_k,i_1\cdots i_k)}$ for the $k\times k$ principal submatrix of $A$ corresponding to the indices $i_1,i_2,\cdots,i_k$, then an equivalent definition of the $k$-trace of $A$ is given by 
\begin{equation}
\label{def:k-trace2}
\trace_k[A] = \sum_{1\leq i_1<i_2<\cdots<i_k\leq n}\det[A_{(i_1\cdots i_k,i_1\cdots i_k)}],\qquad 1\leq k\leq n.
\end{equation}
Using the second definition \eqref{def:k-trace2}, one can check that for any $1\leq k\leq n$, the $k$-trace enjoys the cyclic invariance property like the normal trace and the determinant. That is for any $A,B\in \mathbb{C}^n$, $\trace_k[AB]=\trace_k[BA]$.

For any function $f:\mathbb{R}\rightarrow\mathbb{R}$, the extension of $f$ to a function from $\mathcal{H}_n$ to $\mathcal{H}_n$ is given by 
\[f(A)=\sum_{i=1}^nf(\lambda_i)\mathbf{u}_i\mathbf{u}_i^*, \quad A\in \mathcal{H}_n,\]
where $\lambda_1,\lambda_2,\cdots,\lambda_n$ are the eigenvalues of $A$, and $\mathbf{u}_1,\mathbf{u}_2,\cdots,\mathbf{u}_n$ are the corresponding normalized eigenvectors. One can find more details and properties of matrix functions in \cite{carlen2010trace,Vershynina:2013}. 

\subsection{Main results}
Our main contribution is the following generalized Lieb's theorem.
\begin{thm}
\label{thm:GeneralizedLiebThm}
(\textbf{Generalized Lieb's Theorem}) For any $1\leq k\leq n$ and any $H\in\mathcal{H}_n$, the function
\begin{align}
\mathcal{H}_n^{++}\ &\longrightarrow \ \mathbb{R} \nonumber \\
A \ &\longmapsto\ \left(\trace_k [\exp(H+\ln A)]\right)^{\frac{1}{k}}
\label{eqt:function1}
\end{align}
is concave. Equivalently, for any $1\leq k\leq n$, the function
\begin{align}
\mathcal{H}_n^{++}\ &\longrightarrow \ \mathbb{R} \nonumber \\
A \ &\longmapsto\ \ln \trace_k [\exp(H+\ln A)]
\label{eqt:function2}
\end{align}
is concave.
\end{thm}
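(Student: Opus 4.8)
The plan is to reduce the stated concavity to a second-order inequality along line segments, and then to settle that inequality with two tools: the classical Lieb's theorem (in a weighted form, for the $\exp/\ln$ part) and the Alexandrov--Fenchel inequality for mixed discriminants (for the $k$-th power part). Write $\Phi(A)=\trace_k[\exp(H+\ln A)]$. Since $\ln(cA)=\ln A+(\ln c)I$ for $c>0$, the map $A\mapsto\exp(H+\ln A)$ is positively homogeneous of degree one, so $\Phi$ is real-analytic, strictly positive, and homogeneous of degree $k$ on $\mathcal{H}_n^{++}$; in particular $\Phi^{1/k}$ is homogeneous of degree one, which makes its concavity equivalent to that of $\tfrac1k\ln\Phi$, hence to that of $\ln\Phi$ (so the two displayed forms are equivalent). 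Since $\Phi$ is a positive $C^2$ function, $\Phi^{1/k}$ is concave precisely when, along every segment $t\mapsto A+tB$ with $A\in\mathcal{H}_n^{++}$, $B\in\mathcal{H}_n$, the scalar function $f(t)=\Phi(A+tB)$ satisfies $f(0)f''(0)\le\tfrac{k-1}{k}f'(0)^2$. This is the inequality I would prove.

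I would then expand $f,f',f''$ at $t=0$ using mixed discriminants. Set $\Psi=\exp(H+\ln A)\succ0$ and let $\Psi_1,\Psi_2\in\mathcal{H}_n$ be the first and second derivatives of $t\mapsto\exp(H+\ln(A+tB))$ at $t=0$, obtained by the chain rule from the integral formulas for $D\exp$ and $D\ln$; note that $\Psi_2$ contains the term $D\exp(H+\ln A)\big[D^2\ln(A)[B,B]\big]$ with $D^2\ln(A)[B,B]\preceq0$ by operator concavity of $\ln$. Using $\trace_k[M]=\binom{n}{k}D(M,\dots,M,I,\dots,I)$ ($k$ copies of $M$ and $n-k$ copies of $I$) together with the multilinearity and symmetry of $D$,
\begin{align*}
f(0)&=\binom{n}{k}D(\Psi^{(k)},I^{(n-k)}),\qquad f'(0)=\binom{n}{k}k\,D(\Psi_1,\Psi^{(k-1)},I^{(n-k)}),\\
f''(0)&=\binom{n}{k}k\Big[D(\Psi_2,\Psi^{(k-1)},I^{(n-k)})+(k-1)\,D(\Psi_1,\Psi_1,\Psi^{(k-2)},I^{(n-k)})\Big],
\end{align*}
where superscripts denote multiplicities of repeated arguments of $D$. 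Substituting and cancelling the positive factor $\binom{n}{k}^2k$, the target inequality becomes
\begin{align*}
&D(\Psi^{(k)},I^{(n-k)})\,D(\Psi_2,\Psi^{(k-1)},I^{(n-k)})+(k-1)\,D(\Psi^{(k)},I^{(n-k)})\,D(\Psi_1,\Psi_1,\Psi^{(k-2)},I^{(n-k)})\\
&\qquad\le\ (k-1)\,D(\Psi_1,\Psi^{(k-1)},I^{(n-k)})^2.
\end{align*}

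Here the Alexandrov--Fenchel inequality enters, applied with background matrices $\Psi^{(k-2)},I^{(n-k)}$ (all positive definite) and the two variable slots $\Psi_1$ and $\Psi$: it gives exactly $D(\Psi_1,\Psi^{(k-1)},I^{(n-k)})^2\ge D(\Psi_1,\Psi_1,\Psi^{(k-2)},I^{(n-k)})\,D(\Psi^{(k)},I^{(n-k)})$, so the two $(k-1)$ terms cancel and it only remains to show $D(\Psi_2,\Psi^{(k-1)},I^{(n-k)})\le0$ (using $D(\Psi^{(k)},I^{(n-k)})=\trace_k[\Psi]/\binom{n}{k}>0$). Finally I would recognize this residual inequality as a weighted instance of Lieb's theorem: with $G:=\nabla_M\trace_k[M]\big|_{M=\Psi}\succ0$ (which commutes with $\Psi$) one has $\binom{n}{k}k\,D(\Psi_2,\Psi^{(k-1)},I^{(n-k)})=\trace[G\Psi_2]=\frac{d^2}{dt^2}\big|_{t=0}\trace\big[G\exp(H+\ln(A+tB))\big]$, so $D(\Psi_2,\Psi^{(k-1)},I^{(n-k)})\le0$ follows from the concavity of $A\mapsto\trace[G\exp(H+\ln A)]$ for fixed $G\in\mathcal{H}_n^+$ — a standard strengthening of Lieb's theorem, obtainable for instance from Lieb's concavity theorem by a Lie--Trotter argument. (For $k=1$ no Alexandrov--Fenchel is needed and this weighted Lieb's theorem is the whole story.)

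Two steps should need genuine care, and I would isolate both as preliminary lemmas before the main proof. The lighter one is the mixed-discriminant bookkeeping: computing the first two derivatives of $\trace_k[\exp(H+\ln A)]$, tracking the combinatorial factors, and verifying the positivity facts (mixed discriminants of positive semidefinite arguments are nonnegative, $G\succ0$, etc.) so that the Alexandrov--Fenchel inequality is seen to enter with precisely the coefficient $(k-1)$ produced by the Hessian of $\trace_k$. The heavier point is establishing the weighted Lieb's theorem $A\mapsto\trace[G\exp(H+\ln A)]$: this is where the analytic content sits, and it is the ingredient that genuinely generalizes the classical Lieb's theorem rather than merely repackaging it.
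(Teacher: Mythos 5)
Your reduction to the second-order inequality $f(0)f''(0)\le\frac{k-1}{k}f'(0)^2$ and the mixed-discriminant expansion of $f,f',f''$ are correct and match the paper's setup. The gap is in the last step. After you apply Alexandrov--Fenchel to replace $(k-1)D(\Psi_1,\Psi^{(k-1)},I^{(n-k)})^2$ by its \emph{lower} bound $(k-1)D(\Psi_1,\Psi_1,\Psi^{(k-2)},I^{(n-k)})D(\Psi^{(k)},I^{(n-k)})$, the ``cancellation'' leaves you with a \emph{strictly stronger} inequality than the one you must prove, namely $D(\Psi_2,\Psi^{(k-1)},I^{(n-k)})\le 0$. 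That inequality is false, and so is the ``weighted Lieb theorem'' $A\mapsto\trace[G\exp(H+\ln A)]$ concave for arbitrary $G\in\mathcal{H}_n^{+}$ that you invoke to try to prove it: if it held for all $G\succeq 0$, it would force $\Psi_2\preceq 0$, i.e.\ operator concavity of $A\mapsto\exp(H+\ln A)$, which is not the case.

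A concrete counterexample with $n=k=2$: take $H=\mathrm{diag}(1,-1)$, $A=I_2$, $C=\begin{pmatrix}0&1\\1&0\end{pmatrix}$, so $\Psi=e^H=\mathrm{diag}(e,e^{-1})$. A short computation gives $\Psi_1=\frac{e-e^{-1}}{2}C$ (off-diagonal), hence $D(\Psi_1,\Psi_1)=\det\Psi_1=-\frac{(e-e^{-1})^2}{4}$. Since $g(t)=\det[\exp(H+\ln(I+tC))]=\det(I+tC)=1-t^2$, one has $g''(0)=2D(\Psi_2,\Psi)+2D(\Psi_1,\Psi_1)=-2$, whence
\[
D(\Psi_2,\Psi)=-1+\frac{(e-e^{-1})^2}{4}=\frac{e^2+e^{-2}-6}{4}\approx 0.38>0 .
\]
So $D(\Psi_2,\Psi^{(k-1)},I^{(n-k)})>0$ here, and $\trace[G\Psi_2]>0$ with $G=\nabla_M\det M|_{M=\Psi}=\mathrm{diag}(e^{-1},e)\succ 0$. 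The target $f(0)f''(0)\le\frac{1}{2}f'(0)^2$ still holds (it is $-2\le 0$), but it does so only because the very negative cross term $D(\Psi_1,\Psi_1)$ rescues it; once you ``cancel'' that term via AF you have thrown away the rescue.

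What the paper does instead is not to split the Hessian into a pure-Lieb part plus a pure-AF part. It first proves a hybrid trace inequality (Lemma~\ref{lem:TraceIneq}) whose proof \emph{interleaves} the Lieb variant (Lemma~\ref{lem:LiebThm}) with the expansion relations for the symmetric polynomials $d_i^{(n,k)}, g_{ij}^{(n,k)}$; this yields the bound $g''(0)\le\trace[\MM_2^{(k)}(TB,TB,B)]$ with $TB$ (not $\Psi_1=\int_0^1 B^s T B^{1-s}\,ds$) in the slots. Only then is Alexandrov--Fenchel applied, and crucially to $TB$ rather than $\Psi_1$: the two give the same value of $g'(0)=\trace[\MM_1^{(k)}(TB;B)]=\trace[\MM_1^{(k)}(\Psi_1;B)]$, but the second-order terms $D(TB,TB,B^{(k-2)},I^{(n-k)})$ and $D(\Psi_1,\Psi_1,\Psi^{(k-2)},I^{(n-k)})$ differ, and the former is what AF needs to match against $g(0)$. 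Reproducing the argument therefore requires proving something like the paper's Lemma~\ref{lem:TraceIneq}; the ``weighted Lieb theorem'' shortcut does not exist.
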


This theorem extends the Lieb's theorem from normal trace to elementary symmetric polynomials of eigenvalues, and hence connects it to theories of multilinear, symmetric forms of matrices. Indeed, as we will see in its proof, \Cref{thm:GeneralizedLiebThm} is a joint result of the original Lieb's theorem and the Alexandrov-Fenchel inequality for mixed discriminants. One can get some first ideas by looking at three extreme cases that relate to some well-known results.
\begin{itemize}
\item $k=1$: The concavity of $A\longmapsto\trace [\exp(H+\ln A)]$ is the original Lieb's theorem.
\item $k=n$: We have $\left(\trace_n [\exp(H+\ln A)]\right)^{\frac{1}{n}} = \det[A]^\frac{1}{n}\cdot\exp(\frac{1}{n}\trace[H])$ and $\ln \trace_n [\exp(H+\ln A)] = \ln\det[A]+\trace[H]$. The concavity of $\det[A]^\frac{1}{n}$ or $\ln\det[A]$ is known as the Brunn-Minkowski theorem \cite{schneider2014convex}.
\item $H=\bf{0}$: The concavity of $\trace_k[A]^\frac{1}{k}$, also know as the general Brunn-Minkowski theorem, is a consequence of the Alexandrov-Fenchel inequality for mixed discriminants. We will review this in \Cref{subsec:Mixeddiscriminant}. 
\end{itemize}
 
A direct application of our generalized Lieb's theorem is to derive expectation estimates and tail bounds on the sum of the k largest (or smallest) eigenvalues of a class of random matrices. In particular, we consider random matrices taking the form $Y=\sum_{i=1}^mX^{(i)}$, where $\{X^{(i)}\}_{1\leq i\leq m}\subset\mathcal{H}_n$ is a finite sequence of independent, random, Hermitian matrices. For this kind of matrices, we will prove the following generic estimates. Recall that we denote by $\lambda_i(A)$ the $i_{\text{th}}$ largest eigenvalue of any matrix $A\in\mathcal{H}_n$.

\begin{thm} 
\label{thm:ksummasterbound}
Given any finite sequence of independent, random matrices $\{X^{(i)}\}_{i=1}^m\subset\mathcal{H}_n$, let $Y=\sum_{i=1}^mX^{(i)}$. Then for any $1\leq k\leq n$,
\begin{subequations}
\begin{align}
&\sum_{i=1}^k\lambda_i(\mathbb{E}Y) \leq \mathbb{E}\sum_{i=1}^k\lambda_i(Y) \leq \inf_{\theta>0}\ \frac{1}{\theta}\ln \trace_k\big[\exp\big(\sum_{i=1}^m\ln\mathbb{E}\exp(\theta X^{(i)})\big)\big],
\label{eqt:masterexpect1}\\
&\sum_{i=1}^k\lambda_{n-i+1}(\mathbb{E}Y) \geq \mathbb{E}\sum_{i=1}^k\lambda_{n-i+1}(Y) \geq \sup_{\theta<0}\ \frac{1}{\theta}\ln \trace_k\big[\exp\big(\sum_{i=1}^m\ln\mathbb{E}\exp(\theta X^{(i)})\big)\big].
\label{eqt:masterexpect2}
\end{align}
\label{eqt:masterexpect}
\end{subequations}
Furthermore, for all $t\in \mathbb{R}$,
\begin{subequations}
\begin{align}
&\mathbb{P}\left\{\sum_{i=1}^k\lambda_i(Y) \geq  t\right\} \leq \inf_{\theta>0}\ e^{-\frac{\theta t}{k}}\Big(\trace_k\big[\exp\big(\sum_{i=1}^m\ln\mathbb{E}\exp(\theta X^{(i)})\big)\big]\Big)^\frac{1}{k},
\label{eqt:mastertail1}\\
& \mathbb{P}\left\{\sum_{i=1}^k\lambda_{n-i+1}(Y) \leq t\right\}\leq \inf_{\theta<0}\ e^{-\frac{\theta t}{k}}\Big(\trace_k\big[\exp\big(\sum_{i=1}^m\ln\mathbb{E}\exp(\theta X^{(i)})\big)\big]\Big)^\frac{1}{k}.
\label{eqt:mastertail2}
\end{align}
\label{eqt:mastertail}
\end{subequations}
\end{thm}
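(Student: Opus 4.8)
The plan is to follow the strategy of Tropp's proof of the matrix master bounds, substituting the generalized Lieb's theorem (\Cref{thm:GeneralizedLiebThm}) for the original one wherever Tropp invokes it, and using the Courant--Fischer variational formula for sums of extreme eigenvalues in place of the scalar Rayleigh quotient. The key object is the ``$k$-trace Laplace transform'' $A \mapsto \ln\trace_k[\exp A]$, and the bridge between eigenvalue sums and this functional is the Ky Fan variational principle: for $A\in\mathcal{H}_n$, $\sum_{i=1}^k\lambda_i(A) = \max\{\trace[P A P] : P \text{ an orthogonal projection of rank } k\}$, together with the elementary bound $\exp\bigl(\theta\sum_{i=1}^k\lambda_i(A)\bigr) \le \trace_k[\exp(\theta A)]$ for $\theta>0$ (which follows since the left side is $\exp$ of the sum of the $k$ largest eigenvalues of $\theta A$, while $\trace_k$ sums the products of \emph{all} size-$k$ subsets of the positive numbers $e^{\theta\lambda_i}$).

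First I would prove the expectation bound \eqref{eqt:masterexpect1}. The lower bound $\sum_{i=1}^k\lambda_i(\mathbb{E}Y)\le\mathbb{E}\sum_{i=1}^k\lambda_i(Y)$ is immediate from Ky Fan's principle and Jensen's inequality, since $Y\mapsto\sum_{i=1}^k\lambda_i(Y)=\max_P\trace[PYP]$ is a maximum of linear functionals, hence convex. For the upper bound, fix $\theta>0$. Then
\begin{align*}
\exp\Bigl(\theta\,\mathbb{E}\textstyle\sum_{i=1}^k\lambda_i(Y)\Bigr)
&\le \mathbb{E}\exp\Bigl(\theta\textstyle\sum_{i=1}^k\lambda_i(Y)\Bigr)
\le \mathbb{E}\,\trace_k\bigl[\exp(\theta Y)\bigr],
\end{align*}
using Jensen and the elementary bound above. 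Now I need to control $\mathbb{E}\,\trace_k[\exp(\theta\sum_i X^{(i)})]$. Write $g(A) = \ln\trace_k[\exp A]$, which is concave by \Cref{thm:GeneralizedLiebThm} (take $H$ to be the partial sum and vary one summand). Conditioning on all but the last matrix and applying Lieb's-theorem-style iteration: $\mathbb{E}_{X^{(m)}}\trace_k[\exp(\theta\sum_i X^{(i)})] = \mathbb{E}_{X^{(m)}}\exp\bigl(g(\theta\sum_{i<m}X^{(i)} + \ln\exp(\theta X^{(m)}))\bigr) \le \exp\bigl(g(\theta\sum_{i<m}X^{(i)} + \ln\mathbb{E}_{X^{(m)}}\exp(\theta X^{(m)}))\bigr) = \trace_k[\exp(\theta\sum_{i<m}X^{(i)} + \ln\mathbb{E}\exp(\theta X^{(m)}))]$, where the inequality is Jensen applied to the concave function $g$. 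Iterating over $i=m,m-1,\dots,1$ peels off every random matrix and yields $\mathbb{E}\,\trace_k[\exp(\theta Y)] \le \trace_k[\exp(\sum_{i=1}^m\ln\mathbb{E}\exp(\theta X^{(i)}))]$. Taking logarithms, dividing by $\theta$, and infimizing over $\theta>0$ gives \eqref{eqt:masterexpect1}. The bound \eqref{eqt:masterexpect2} follows by applying \eqref{eqt:masterexpect1} to $\{-X^{(i)}\}$ and noting $\sum_{i=1}^k\lambda_{n-i+1}(Y) = -\sum_{i=1}^k\lambda_i(-Y)$, which flips $\inf_{\theta>0}$ into $\sup_{\theta<0}$.

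For the tail bounds, fix $\theta>0$ and apply Markov's inequality to the nonnegative random variable $\trace_k[\exp(\theta Y)]$, bounding $\mathbb{P}\{\sum_{i=1}^k\lambda_i(Y)\ge t\} = \mathbb{P}\{\exp(\theta\sum_{i=1}^k\lambda_i(Y))\ge e^{\theta t}\} \le \mathbb{P}\{\trace_k[\exp(\theta Y)] \ge e^{\theta t}\} \le e^{-\theta t}\,\mathbb{E}\,\trace_k[\exp(\theta Y)] \le e^{-\theta t}\,\trace_k[\exp(\sum_{i=1}^m\ln\mathbb{E}\exp(\theta X^{(i)}))]$, using the elementary bound and the iteration just established. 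Raising to the $1/k$ power is not quite what is stated; rather, I would instead use the sharper substitution $\mathbb{P}\{\sum_{i=1}^k\lambda_i(Y)\ge t\} \le e^{-\theta t}\,\mathbb{E}\,\trace_k[\exp(\theta Y)]$ and then note that by the concavity of $x\mapsto x^{1/k}$ and the iteration run on $(\trace_k[\cdot])^{1/k}$ directly (which is legitimate since $A\mapsto(\trace_k[\exp A])^{1/k}$ is itself concave by \Cref{thm:GeneralizedLiebThm}), one gets $\mathbb{E}(\trace_k[\exp(\theta Y)])^{1/k} \le (\trace_k[\exp(\sum_i\ln\mathbb{E}\exp(\theta X^{(i)}))])^{1/k}$; combining with $\mathbb{P}\{\sum\lambda_i\ge t\}\le e^{-\theta t/k}\,\mathbb{E}(\trace_k[\exp(\theta Y)])^{1/k}$ (Markov applied after raising both sides of $\exp(\theta\sum\lambda_i)\le\trace_k[\exp(\theta Y)]$ to the $1/k$) yields \eqref{eqt:mastertail1}. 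Infimizing over $\theta>0$ finishes it, and \eqref{eqt:mastertail2} again follows by negation.

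The main obstacle is the concavity-iteration step: one must verify that at each stage the ``frozen'' matrix $H = \theta\sum_{i\ne j}X^{(i)}$ (or its log-exp avatar) is Hermitian so that \Cref{thm:GeneralizedLiebThm} applies, and that Jensen's inequality is used in the correct direction for the concave functional $g = \ln\trace_k\circ\exp$ (equivalently $(\trace_k\circ\exp)^{1/k}$) — the subtlety being that $\mathbb{E}\exp(\theta X^{(j)})$ must be positive definite for $\ln\mathbb{E}\exp(\theta X^{(j)})$ to be well-defined, which holds automatically since $\exp(\theta X^{(j)})\succ\bzero$ almost surely. A secondary care point is the elementary inequality $\exp(\theta\sum_{i=1}^k\lambda_i(A))\le\trace_k[\exp(\theta A)]$, which I would prove by noting $\trace_k$ of a positive-definite matrix dominates its single largest size-$k$ eigenvalue-product term.
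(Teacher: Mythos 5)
Your overall plan tracks the paper's, and your tail-bound argument lands on the paper's approach once you catch yourself and run the iteration directly on $(\trace_k[\exp(\cdot)])^{1/k}$. But the expectation bound \eqref{eqt:masterexpect1} as you've written it has a genuine gap, and it's the same flaw you noticed (but did not fix) in the expectation case.

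You open by exponentiating: $\exp\bigl(\theta\mathbb{E}\sum_{i=1}^k\lambda_i(Y)\bigr) \le \mathbb{E}\exp\bigl(\theta\sum_{i=1}^k\lambda_i(Y)\bigr) \le \mathbb{E}\trace_k[\exp(\theta Y)]$, and you then claim $\mathbb{E}\trace_k[\exp(\theta Y)] \le \trace_k\bigl[\exp\bigl(\sum_i\ln\mathbb{E}\exp(\theta X^{(i)})\bigr)\bigr]$ via a ``concave iteration'' with $g = \ln\trace_k\circ\exp$. That step does not follow. Jensen for the concave $g$ gives $\mathbb{E}_{X^{(m)}} g(\cdots) \le g\bigl(\cdots,\ln\mathbb{E}_{X^{(m)}}\exp(\theta X^{(m)})\bigr)$, but you need $\mathbb{E}_{X^{(m)}}\exp(g(\cdots)) \le \exp(g(\cdots))$, and passing through the convex $\exp$ sends Jensen the wrong way: $\mathbb{E}\exp(g) \ge \exp(\mathbb{E}g)$. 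What you would actually need is concavity of $A\mapsto\trace_k[\exp(H+\ln A)]$ itself, but \Cref{thm:GeneralizedLiebThm} only supplies concavity of its $k$-th root and of its logarithm, and for $k>1$ neither implies concavity of the undecorated function. So the quantity $\mathbb{E}\trace_k[\exp(\theta Y)]$ admits no Jensen-type upper bound from the tools at hand.

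The fix — exactly what the paper does, and explicitly flags with ``we don't switch the expectation operator and the logarithm'' — is to never exponentiate. Use the pointwise inequality $\sum_{i=1}^k\lambda_i(Y) = \frac{1}{\theta}\ln\prod_{i=1}^k e^{\theta\lambda_i(Y)} \le \frac{1}{\theta}\ln\trace_k[\exp(\theta Y)]$, valid because for $A\in\mathcal{H}_n^{+}$ the product $\prod_{i=1}^k\lambda_i(A)$ is one term in the nonnegative sum defining $\trace_k[A]$, take expectations, and only then apply the concave-$\ln\trace_k\circ\exp$ iteration to $\mathbb{E}\ln\trace_k[\exp(\theta Y)]$ directly — which is precisely what that iteration is built to bound, with no Jensen-for-$\exp$ required or permitted. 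The paper packages this in \Cref{lem:MultiConcave}, whose inequality \eqref{eqt:multiconcave2} serves the expectation bound and whose inequality \eqref{eqt:multiconcave1} serves your corrected tail-bound argument. Everything else — the Ky Fan convexity argument for the lower bound, the Markov step raised to the power $1/k$, and reducing \eqref{eqt:masterexpect2} and \eqref{eqt:mastertail2} to the $\theta<0$ case via $\theta\lambda_{n-i+1}(A)=\lambda_i(\theta A)$ — matches the paper.
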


This theorem is a generalization of Theorem 3.6.1 in \cite{MAL-048}, where Tropp used matrix Laplace transform method and the Lieb's theorem to obtain the master bounds on the largest and the smallest eigenvalues for the same class of random matrices. The essential use of the Lieb's theorem in Tropp's proof is to establish the Jensen's inequality
\[\mathbb{E}\trace\big[\exp(H+\ln A)\big]\leq \trace\big[\exp(H+\ln \mathbb{E}A)\big],\]
for any random matrix $A\in \mathcal{H}_n^{++}$ and any fixed $H\in \mathcal{H}_n$. Using the generalized Lieb's theorem, we will extend this inequality to 
\[\mathbb{E}\big(\trace_k\big[\exp(H+\ln A)\big]\big)^\frac{1}{k}\leq \big(\trace_k\big[\exp(H+\ln \mathbb{E}A)\big]\big)^\frac{1}{k}\]
for proving tail bounds, and 
\[\mathbb{E}\ln\trace_k\big[\exp(H+\ln A)\big]\leq \ln\trace_k\big[\exp(H+\ln \mathbb{E}A)\big]\]
for proving expectation estimates. 

With \Cref{thm:ksummasterbound}, we can establish more concrete estimates for particular random matrices in this class. For example, we consider the scenario where each $X^{(i)}$ in the sum $Y=\sum_{i=1}^mX^{(i)}$ also satisfies $0\leq \lambda_{n}(X^{(i)})\leq \lambda_{1}(X^{(i)})\leq c$ for some uniform constant $c>0$. For this positive semi-definite case, we will prove the so called eigenvalue Chernoff bounds, which again generalize Theorem 5.1.1 \cite{MAL-048} from the largest (or smallest) eigenvalue to the sum of the $k$ largest (or smallest) eigenvalues.

\section{Preparations}
\label{sec:Preparation}

\subsection{Mixed discriminant}
\label{subsec:Mixeddiscriminant} 
The mixed discriminant $D(A^{(1)},A^{(2)},\cdots,A^{(n)})$ of $n$ matrices $A^{(1)},A^{(2)},\cdots,A^{(n)}\in \mathbb{C}^{n\times n}$ is defined as 
\begin{equation}
D(A^{(1)},A^{(2)},\cdots,A^{(n)}) = \frac{1}{n!} \sum_{\sigma\in S_n}\det\left[\begin{array}{cccc}
 A_{11}^{(\sigma(1))} & A_{12}^{(\sigma(2))} & \cdots & A_{1n}^{(\sigma(n))} \\
 A_{21}^{(\sigma(1))} & A_{22}^{(\sigma(2))} & \cdots & A_{1n}^{(\sigma(n))} \\
 \vdots & \vdots & \ddots & \vdots\\
 A_{n1}^{(\sigma(1))} & A_{n2}^{(\sigma(2))} & \cdots & A_{nn}^{(\sigma(n))}
\end{array}\right],
\label{eqt:mixdiscriminant}
\end{equation}
where $S_n$ denotes the symmetric group of order $n$. We here list some basic facts about mixed discriminants. For more properties of mixed discriminants, one may refer to \cite{bapat1989mixed,panov1987some}.
\begin{itemize} 
\item Symmetry: $D(A^{(1)},A^{(2)},\cdots,A^{(n)})$ is symmetric in $A^{(1)},A^{(2)},\cdots,A^{(n)}$, i.e. 
\[D(A^{(1)},A^{(2)},\cdots,A^{(n)}) = D(A^{\sigma(1)},A^{\sigma(2)},\cdots,A^{\sigma(n)}),\quad \sigma\in S_n.\]
\item Multilinearity: for any $\alpha,\beta\in \mathbb{R}$, 
\[D(\alpha A+\beta B,A^{(2)},\cdots,A^{(n)}) = \alpha D(A,A^{(2)},\cdots,A^{(n)}) + \beta D(B,A^{(2)},\cdots,A^{(n)}).\]
\item Positiveness \cite{bapat1989mixed}: If $A^{(1)},A^{(2)},\cdots,A^{(n)}\in \mathcal{H}_n^+$, then $D(A^{(1)},A^{(2)},\cdots,A^{(n)})\geq0$; if $A^{(1)},A^{(2)},\cdots,A^{(n)}\in \mathcal{H}_n^{++}$, then $D(A^{(1)},A^{(2)},\cdots,A^{(n)})>0$.
\end{itemize}

The relation between the mixed discriminant and $\trace_k$ is obvious. If we calculate the mixed discriminant for $k$ copies of $A\in\mathbb{C}^{n\times n}$ and $n-k$ copies of $I_n$, we can find that 
\begin{align}
D(\underbrace{A,\cdots,A}_{k},\underbrace{I_n,\cdots,I_n}_{n-k}) = \binom{n}{k}^{-1}\sum_{1\leq i_1<i_2<\cdots<i_k\leq n}\det[A_{(i_1\cdots i_k,i_1\cdots i_k)}] = \binom{n}{k}^{-1}\trace_k[A].
\end{align}
This is why the mixed discriminant plays an important role in the proof of our main theorem. In particular, we will need the following inequality on mixed discriminant by Alexandrov \cite{aleksandrov1938theory}.

\begin{thm}
\label{thm:AFinequality}
(\textbf{Alexandrov-Fenchel Inequality for Mixed Discriminants})
For any $B\in \mathcal{H}_n$ and any $A,\underbrace{A^{(3)},\cdots,A^{(n)}}_{n-2}\in \mathcal{H}_n^{++}$, we have
\begin{equation}
D(A,B,A^{(3)},\cdots,A^{(n)})^2\geq D(A,A,A^{(3)},\cdots,A^{(n)})D(B,B,A^{(3)},\cdots,A^{(n)}),
\label{eqt:AFinequality}
\end{equation}
with equality if and only if $B=\lambda A$ for some $\lambda\in \mathbb{R}$.
\end{thm}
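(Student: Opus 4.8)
The plan is to deduce the inequality \eqref{eqt:AFinequality} from a statement about the \emph{signature} of an associated symmetric bilinear form, together with a reverse Cauchy--Schwarz argument valid precisely for quadratic forms of Lorentzian type. Fix the positive definite matrices $A^{(3)},\dots,A^{(n)}$ and define
\[
  \phi(B,C)\ :=\ D(B,C,A^{(3)},\dots,A^{(n)}),\qquad B,C\in\mathcal{H}_n.
\]
By the symmetry and multilinearity of the mixed discriminant, $\phi$ is a well-defined real symmetric bilinear form on the real vector space $\mathcal{H}_n$, which has dimension $n^2$, and by the positiveness property $\phi(A,A)=D(A,A,A^{(3)},\dots,A^{(n)})>0$ because $A\in\mathcal{H}_n^{++}$. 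The whole theorem will follow once we know the key fact that, for \emph{every} choice of $A^{(3)},\dots,A^{(n)}\in\mathcal{H}_n^{++}$, the form $\phi$ is nondegenerate and has exactly one positive eigenvalue, i.e.\ its signature on $\mathcal{H}_n$ is $(1,\,n^2-1)$.

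Granting this fact, the deduction is short. Put $A^{\perp}=\{\,Y\in\mathcal{H}_n:\phi(A,Y)=0\,\}$; since $\phi(A,A)\neq 0$ there is a $\phi$-orthogonal splitting $\mathcal{H}_n=\mathbb{R}A\oplus A^{\perp}$, and comparing signatures shows that $\mathbb{R}A$ carries the unique positive direction, so $\phi$ is negative definite on $A^{\perp}$. Writing $B=\beta A+B'$ with $\beta=\phi(A,B)/\phi(A,A)$ and $B'\in A^{\perp}$, one expands
\[
  \phi(A,A)\,\phi(B,B)\ =\ \phi(A,B)^2+\phi(A,A)\,\phi(B',B'),
\]
and since $\phi(A,A)>0$ while $\phi(B',B')\le 0$, with $\phi(B',B')=0$ only if $B'=0$, i.e.\ $B=\beta A$, this is exactly \eqref{eqt:AFinequality} together with its equality condition.

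It remains to establish the key fact, which is the heart of the matter; I would split it into two steps. First, on the set of tuples $(A^{(3)},\dots,A^{(n)})$ at which $\phi$ is nondegenerate the number of positive eigenvalues of $\phi$ is locally constant, since there no eigenvalue can cross $0$. Because $\mathcal{H}_n^{++}$ is convex, the product $\mathcal{H}_n^{++}\times\cdots\times\mathcal{H}_n^{++}$ of $n-2$ copies is connected, so once $\phi$ is known to be nondegenerate for all such tuples its number of positive eigenvalues is the same for every tuple, and it can be computed at $A^{(3)}=\cdots=A^{(n)}=I_n$: there the relation between mixed discriminants and $\trace_k$ recorded above gives, with $k=2$, that $D(A,A,I_n,\dots,I_n)=\binom{n}{2}^{-1}\trace_2[A]=\tfrac12\binom{n}{2}^{-1}\big((\trace[A])^2-\trace[A^2]\big)$, hence $\phi(B,C)=\tfrac12\binom{n}{2}^{-1}\big(\trace[B]\,\trace[C]-\trace[BC]\big)$; relative to the inner product $\langle X,Y\rangle=\trace[XY]$ on $\mathcal{H}_n$, the self-adjoint operator representing $\phi$ is $C\mapsto\tfrac12\binom{n}{2}^{-1}\big((\trace[C])\,I_n-C\big)$, which has one positive eigenvalue, on $\mathbb{R}I_n$, and $n^2-1$ negative eigenvalues, on the trace-free Hermitian matrices. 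Hence the signature is $(1,n^2-1)$ for every tuple.

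The main obstacle is thus the second step: the nondegeneracy of $\phi$ for \emph{every} $A^{(3)},\dots,A^{(n)}\in\mathcal{H}_n^{++}$, equivalently, that $D(X,Y,A^{(3)},\dots,A^{(n)})=0$ for all $Y\in\mathcal{H}_n$ forces $X=0$. I would prove this by induction on $n$, the base case $n=2$ being the nondegeneracy of $\det$ on $\mathcal{H}_2$. For the inductive step, one reduces by multilinearity and a limiting argument to the case where one argument, say $A^{(n)}$, is a positive diagonal matrix $\operatorname{diag}(a_1,\dots,a_n)$, and then the rank-one reduction formula for mixed discriminants expresses $D(X,Y,A^{(3)},\dots,A^{(n)})$ as $\tfrac1n\sum_{j=1}^{n} a_j\, D'(X_{\hat{\jmath}},Y_{\hat{\jmath}},A^{(3)}_{\hat{\jmath}},\dots,A^{(n-1)}_{\hat{\jmath}})$, where $D'$ is the $(n-1)$-dimensional mixed discriminant and $M_{\hat{\jmath}}$ denotes $M$ with its $j$-th row and column deleted; a suitable choice of test matrices $Y$ combined with the inductive hypothesis in dimension $n-1$ then forces $X=0$. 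This is where positive definiteness of $A^{(3)},\dots,A^{(n)}$ enters essentially; it is the classical hard part of Alexandrov's argument \cite{aleksandrov1938theory}, and conceptually it reflects that $\det$ is a hyperbolic (indeed Lorentzian) polynomial on $\mathcal{H}_n$ with open hyperbolicity cone $\mathcal{H}_n^{++}$, so that the key fact above is an instance of G{\aa}rding's inequality for hyperbolic polynomials.
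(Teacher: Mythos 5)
The paper does not prove \Cref{thm:AFinequality} at all: it states the result and cites Alexandrov \cite{aleksandrov1938theory} for the real symmetric case and \cite{2017arXiv171000520L} for the Hermitian extension. So there is no in-paper proof to compare against, and your attempt is necessarily a genuinely different route; it is in fact a sketch of the classical argument.

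Your high-level structure is correct and is essentially Alexandrov's (equivalently G{\aa}rding's hyperbolic-polynomial) proof: once one knows that the symmetric bilinear form $\phi(B,C)=D(B,C,A^{(3)},\dots,A^{(n)})$ on $\mathcal{H}_n$ has Lorentzian signature $(1,n^2-1)$, the orthogonal splitting $\mathcal{H}_n=\mathbb{R}A\oplus A^{\perp}$, the negative definiteness of $\phi$ on $A^{\perp}$, and the expansion $\phi(A,A)\phi(B,B)=\phi(A,B)^2+\phi(A,A)\phi(B',B')$ with $B'\in A^{\perp}$ give both the inequality and the equality condition exactly as you wrote. Your signature computation at the reference tuple $A^{(3)}=\cdots=A^{(n)}=I_n$ is also correct: polarizing $D(A,A,I_n,\dots,I_n)=\binom{n}{2}^{-1}\trace_2[A]$ gives $\phi(B,C)=\tfrac12\binom{n}{2}^{-1}\bigl(\trace[B]\trace[C]-\trace[BC]\bigr)$, whose representing operator relative to $\langle X,Y\rangle=\trace[XY]$ is $C\mapsto\tfrac12\binom{n}{2}^{-1}\bigl((\trace C)I_n-C\bigr)$, with eigenvalue $+\tfrac{n-1}{2}\binom{n}{2}^{-1}$ on $\mathbb{R}I_n$ and $-\tfrac12\binom{n}{2}^{-1}$ on the $(n^2-1)$-dimensional trace-free subspace.

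The one genuine gap is the nondegeneracy step, and you yourself flag it as such, but the sketch you give for it does not obviously close. After diagonalizing $A^{(n)}=\operatorname{diag}(a_1,\dots,a_n)$ and using the (correct) rank-one reduction $D(X,Y,A^{(3)},\dots,A^{(n-1)},E_{jj})=\tfrac1n\,D'(X_{\hat\jmath},Y_{\hat\jmath},\dots)$, you write that ``a suitable choice of test matrices $Y$'' isolates one summand and lets the $(n-1)$-dimensional inductive hypothesis force $X=0$. But for $n\ge 3$ one cannot choose $Y\in\mathcal{H}_n$ so that all the deleted-row-and-column submatrices $Y_{\hat\jmath}$ with $j\neq j_0$ vanish while $Y_{\hat{\jmath_0}}$ is arbitrary: requiring $Y_{\hat\jmath}=0$ for all $j\neq j_0$ already forces $Y=0$. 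So the naive induction on $n$ via this reduction formula does not separate the terms. The classical route proves nondegeneracy and the signature jointly, either by Alexandrov's more delicate induction (which at each stage already uses the inequality with one argument frozen to a positive definite matrix) or by G{\aa}rding's argument that the degree-two polarization of a hyperbolic polynomial along directions in the hyperbolicity cone is itself hyperbolic, i.e.\ Lorentzian. Either way the nondegeneracy is not a self-contained induction on matrix size in the way you describe, so this step would need to be replaced by the actual Alexandrov/G{\aa}rding argument (or, as the paper does, simply cited).
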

This theorem originally applied to real symmetric matrices when established. A proof of its extension to Hermitian matrices can be found in \cite{2017arXiv171000520L}. By continuity, inequality \eqref{eqt:AFinequality} can extend to the case that $A,A^{(3)},\cdots,A^{(n)}\in \mathcal{H}_n^{+}$, but the necessity of the condition for equality is no longer valid.

Repeatedly applying the Alexandrov-Fenchel inequality \eqref{eqt:AFinequality} grants us the following corollary.
\begin{corollary}
\label{cor:generalAFinequality}
For any $0\leq l\leq k\leq n$, and any $A,B,\underbrace{A^{(k+1)},\cdots,A^{(n)}}_{n-k}\in \mathcal{H}_n^{+}$, we have
\begin{align}
&\ D(\underbrace{A,\cdots,A}_{l},\underbrace{B,\cdots,B}_{k-l},\underbrace{A^{(k+1)},\cdots,A^{(n)}}_{n-k})^k\\
\geq&\ D(\underbrace{A,\cdots,A}_{k},\underbrace{A^{(k+1)},\cdots,A^{(n)}}_{n-k})^l\cdot D(\underbrace{B,\cdots,B}_{k},\underbrace{A^{(k+1)},\cdots,A^{(n)}}_{n-k})^{k-l}. \nonumber
\end{align}
\end{corollary}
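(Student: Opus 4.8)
The plan is to deduce the corollary from the two-term Alexandrov--Fenchel inequality (\Cref{thm:AFinequality}) by iterating it, which amounts to establishing a discrete log-concavity and then comparing with the endpoint chord. First I would reduce to the positive definite case: replacing $A$, $B$, and each $A^{(i)}$ by $A+\epsilon I_n$, $B+\epsilon I_n$, $A^{(i)}+\epsilon I_n$ with $\epsilon>0$ puts all matrices in $\mathcal{H}_n^{++}$, and since every mixed discriminant is a polynomial in the matrix entries, hence continuous, it suffices to prove the inequality for positive definite matrices and then let $\epsilon\downarrow 0$. Note also that the cases $l=0$ and $l=k$ are trivial identities, so one may assume $1\leq l\leq k-1$ (in particular $k\geq 2$).

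Assuming now $A,B,A^{(k+1)},\cdots,A^{(n)}\in\mathcal{H}_n^{++}$, for $0\leq j\leq k$ set
\[
g(j):=D(\underbrace{A,\cdots,A}_{j},\underbrace{B,\cdots,B}_{k-j},\underbrace{A^{(k+1)},\cdots,A^{(n)}}_{n-k}),
\]
which is strictly positive by the positiveness of mixed discriminants. For each $1\leq j\leq k-1$, apply \Cref{thm:AFinequality} with the $n-2$ fixed arguments taken to be $j-1$ copies of $A$, $k-j-1$ copies of $B$, and $A^{(k+1)},\cdots,A^{(n)}$ (all lying in $\mathcal{H}_n^{++}$), and with the two distinguished slots occupied by $A$ and $B$. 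By the symmetry of $D$, this yields $g(j)^2\geq g(j+1)\,g(j-1)$; equivalently, the finite sequence $\big(\ln g(j)\big)_{j=0}^{k}$ has nonpositive second differences, and is therefore concave on $\{0,1,\cdots,k\}$.

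A concave function on $\{0,1,\cdots,k\}$ lies above the chord joining its endpoints, so for $0\leq l\leq k$,
\[
\ln g(l)\ \geq\ \frac{k-l}{k}\ln g(0)+\frac{l}{k}\ln g(k).
\]
Multiplying by $k$ and exponentiating gives $g(l)^k\geq g(0)^{k-l}g(k)^{l}$, which is precisely the asserted inequality in the positive definite case. Letting $\epsilon\downarrow 0$ in the perturbed version then delivers the general positive semi-definite statement.

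There is no deep obstacle here; the points that need care are checking that the positivity hypotheses of \Cref{thm:AFinequality} are met at every application (which is exactly what the initial $\epsilon$-perturbation secures) and the elementary fact that a real sequence with nonpositive second differences dominates its endpoint chord, which one can alternatively verify by a short induction on $k$ rather than invoking discrete concavity directly.
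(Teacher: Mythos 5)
Your proof is correct, and it is the natural elaboration of the paper's one-line remark that the corollary follows by ``repeatedly applying the Alexandrov--Fenchel inequality'' (the paper supplies no further detail). The $\epsilon$-perturbation, the observation that $j\mapsto\ln g(j)$ has nonpositive second differences, and the comparison with the endpoint chord are all sound; the hypotheses of \Cref{thm:AFinequality} are satisfied at each application since every argument lies in $\mathcal{H}_n^{++}$ after perturbation.
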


A direct result of \Cref{cor:generalAFinequality} is the following general Brunn-Minkowski theorem for mixed discriminants.

\begin{corollary}
\label{cor:BMtheorem}
(\textbf{General Brunn-Minkowski Theorem for Mixed Discriminants})
For any $1\leq k\leq n$, and any fixed $\underbrace{A^{(k+1)},\cdots,A^{(n)}}_{n-k}\in \mathcal{H}_n^{+}$, the function
\begin{align}
\mathcal{H}_n^{+}\ &\longrightarrow \ \mathbb{R} \nonumber \\
A \ &\longmapsto\ D(\underbrace{A,\cdots,A}_{k},\underbrace{A^{(k+1)},\cdots,A^{(n)}}_{n-k})^\frac{1}{k}
\end{align}
is concave.
\end{corollary}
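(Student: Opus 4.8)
The plan is to reduce the claimed concavity to the single inequality
\[
f\bigl(tA+(1-t)B\bigr)\ \geq\ t f(A)+(1-t)f(B),\qquad t\in[0,1],
\]
where $f(A):=D(\underbrace{A,\cdots,A}_{k},\underbrace{A^{(k+1)},\cdots,A^{(n)}}_{n-k})^{1/k}$, and then to extract this inequality from \Cref{cor:generalAFinequality} by a binomial expansion. First I would fix $A,B\in\mathcal{H}_n^{+}$ and $t\in[0,1]$, and use the multilinearity and symmetry of the mixed discriminant to expand
\[
D(\underbrace{tA+(1-t)B,\cdots,tA+(1-t)B}_{k},\underbrace{A^{(k+1)},\cdots,A^{(n)}}_{n-k})
=\sum_{l=0}^{k}\binom{k}{l}t^{l}(1-t)^{k-l}\,D(\underbrace{A,\cdots,A}_{l},\underbrace{B,\cdots,B}_{k-l},\underbrace{A^{(k+1)},\cdots,A^{(n)}}_{n-k}).
\]
Every term on the right is nonnegative by the positiveness of mixed discriminants, and for each $0\leq l\leq k$ \Cref{cor:generalAFinequality} gives
\[
D(\underbrace{A,\cdots,A}_{l},\underbrace{B,\cdots,B}_{k-l},\underbrace{A^{(k+1)},\cdots,A^{(n)}}_{n-k})\ \geq\ f(A)^{l}\,f(B)^{k-l},
\]
the endpoint cases $l=0$ and $l=k$ being equalities by the definition of $f$.

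Substituting these bounds into the expansion and recognizing the resulting sum as a binomial power,
\[
D(\underbrace{tA+(1-t)B,\cdots,tA+(1-t)B}_{k},\underbrace{A^{(k+1)},\cdots,A^{(n)}}_{n-k})\ \geq\ \sum_{l=0}^{k}\binom{k}{l}\bigl(t f(A)\bigr)^{l}\bigl((1-t)f(B)\bigr)^{k-l}=\bigl(tf(A)+(1-t)f(B)\bigr)^{k}.
\]
Since $x\mapsto x^{1/k}$ is monotone increasing on $[0,\infty)$, taking $k$-th roots yields $f(tA+(1-t)B)\geq tf(A)+(1-t)f(B)$, which is precisely the concavity of $f$ on the convex cone $\mathcal{H}_n^{+}$.

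There is no serious obstacle here once \Cref{cor:generalAFinequality} is in hand; the only points requiring a little care are bookkeeping ones. One must justify the binomial expansion, which follows formally by iterating multilinearity in each of the $k$ slots holding $tA+(1-t)B$ and collecting equal terms via symmetry. One must also check that the argument survives on the boundary of $\mathcal{H}_n^{+}$, where some of the mixed discriminants (and possibly $f(A)$ or $f(B)$) vanish: this is fine because all quantities involved are nonnegative, so the chain of inequalities and the monotonicity of the $k$-th root still apply, with the usual convention $0^{0}=1$ understood in the degenerate terms. If one prefers to avoid such conventions, it suffices to prove the inequality first for $A,B\in\mathcal{H}_n^{++}$ (with the fixed matrices $A^{(j)}$ slightly perturbed into $\mathcal{H}_n^{++}$) and then recover the general case by continuity of $f$ on $\mathcal{H}_n^{+}$.
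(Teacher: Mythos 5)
Your proof is correct and is essentially identical to the paper's: expand via multilinearity into a binomial sum, bound each term by \Cref{cor:generalAFinequality}, recognize the binomial power, and take $k$-th roots. The extra remarks about boundary cases and continuity are sound but not needed beyond what the paper does implicitly.
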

\begin{proof}
Fixing $A^{(k+1)},\cdots,A^{(n)}$, we will use $D(A[k])$ and $D(A[l],B[k-l])$ to denote 
\[D(\underbrace{A,\cdots,A}_{k},\underbrace{A^{(k+1)},\cdots,A^{(n)}}_{n-k})^\frac{1}{k}\ \text{and}\ D(\underbrace{A,\cdots,A}_{l},\underbrace{B,\cdots,B}_{k-l},\underbrace{A^{(k+1)},\cdots,A^{(n)}}_{n-k})\]
respectively. For any $A,B\in \mathcal{H}_n^+$, and any $\tau\in [0,1]$, using the multilinearity of mixed discriminants and \Cref{cor:generalAFinequality}, we have
\begin{align*}
D((\tau A+(1-\tau)B)[k]) =&\ \sum_{l=0}^k\binom{k}{l}\tau^l(1-\tau)^{k-l}D(A[l],B[k-l])\\
\geq&\ \sum_{l=0}^k\binom{k}{l}\tau^l(1-\tau)^{k-l}D(A[k])^\frac{l}{k}D(B[k])^\frac{k-l}{k}\\
=&\ \big(\tau D(A[k])^\frac{1}{k}+(1-\tau)D(B[k])^\frac{1}{k}\big)^k,
\end{align*}
that is $D((\tau A+(1-\tau)B)[k])^\frac{1}{k}\geq \tau D(A[k])^\frac{1}{k}+(1-\tau)D(B[k])^\frac{1}{k}$.
\end{proof}

If we choose $A^{(k+1)},\cdots,A^{(n)}$ to be $n-k$ copies of $I_n$, \Cref{cor:BMtheorem} immediately implies that the function $A\mapsto \big(\trace_k\big[A\big]\big)^\frac{1}{k}$ is concave on $\mathcal{H}_n^+$, which is a special case of \Cref{thm:GeneralizedLiebThm} with $H=\bf{0}$. So we see the connection between the Alexandrov-Fenchel inequality and our generalized Lieb's theorem. However, the arguments in the proof of \Cref{cor:BMtheorem} do not seem to work with $H\neq \bf{0}$. We hence need more tools to handle the more general case.

\subsection{Exterior algebra}
Here we give a brief review of exterior algebras on the vector space $\mathbb{C}^n$. For more details, one may refer to \cite{bishop1980tensor,Rotman:2114272}. For the convenience of our use, the notations in our paper might be different from those in other materials. For any $1\leq k\leq n$, let $\wedge^k(\mathbb{C}^n)$ denote the vector space of the $k_{th}$ exterior algebra of $\mathbb{C}^n$, equipped with the inner product 
\begin{align*}
\langle \cdot,\cdot\rangle_{\wedge^k}:\quad \wedge^k(\mathbb{C}^n)\times\wedge^k(\mathbb{C}^n)\ &\longrightarrow\ \mathbb{C}\\
\langle u_1\wedge\cdots\wedge u_k,v_1\wedge\cdots\wedge v_k\rangle_{\wedge^k}\ &= 
\det\left[\begin{array}{cccc}
 \langle u_1,v_1\rangle & \langle u_1,v_2\rangle  & \cdots & \langle u_1,v_k\rangle \\
 \langle u_2,v_1\rangle & \langle u_2,v_2\rangle & \cdots & \langle u_2,v_k\rangle\\
 \vdots & \vdots & \ddots & \vdots \\
 \langle u_k,v_1\rangle & \langle u_k,v_2\rangle & \cdots & \langle u_k,v_k\rangle
\end{array}\right],
\end{align*}
where $\langle u,v\rangle=u^*v$ is the standard $l_2$ inner product on $\mathbb{C}^n$.

Let $\mathcal{L}(\wedge^k(\mathbb{C}^n))$ denote the space of all linear operators from $\wedge^k(\mathbb{C}^n)$ to itself. For any matrices $A^{(1)},A^{(2)},\cdots,A^{(k)}\in \mathbb{C}^{n\times n}$, we can define an element in $\mathcal{L}(\wedge^k(\mathbb{C}^n))$:
\begin{align}
\label{def:ExteriorOps}
\mathcal{M}^{(k)}(A^{(1)},A^{(2)},\cdots,A^{(k)}):\quad \wedge^k(\mathbb{C}^n) &\ \longrightarrow\ \wedge^k(\mathbb{C}^n)\nonumber \\
v_1\wedge v_2\wedge\cdots\wedge v_k &\ \longmapsto\  \sum_{ \sigma\in S_k} A^{(\sigma(1))}v_1\wedge A^{(\sigma(2))}v_2\wedge\cdots\wedge A^{(\sigma(k))}v_k,
\end{align}
where $S_k$ is the symmetric group of order $k$. Apparently, the map $\mathcal{M}^{(k)}(A^{(1)},A^{(2)},\cdots,A^{(k)})$ is symmetric in $A^{(1)},A^{(2)},\cdots,A^{(k)}$, and linear in each single $A^{(i)}$. For simplicity, we will use the following notations for any matrices $A,B,C\in \mathbb{C}^{n\times n}$:
\begin{subequations}
\label{eqt:threeops}
\begin{align}
\MM_0^{(k)}(A) &= \frac{1}{k!}\mathcal{M}^{(k)}(A,\cdots,A),\\
\MM_1^{(k)}(A;B) &= \frac{1}{(k-1)!}\mathcal{M}^{(k)}(A,B,\cdots,B),\\
\MM_2^{(k)}(A,B;C) &= \frac{1}{(k-2)!}\mathcal{M}^{(k)}(A,B,C,\cdots,C).
\end{align}
\end{subequations}
To avoid confusion, we define $\MM_1^{(1)}(A;B)=\MM_0^{1}(A)$, $\MM_2^{(1)}(A,B;C)=\bf{0}$, and $\MM_2^{(2)}(A,B;C)=\MM_1^{(2)}(A;B)$. Obviously the identity operator in $\mathcal{L}(\wedge^k(\mathbb{C}^n))$ is $\MM_0(I_n)$. We will be using the following properties:
\begin{itemize}
\item Invertibility: if $A\in \mathbb{C}^{n\times n}$ is invertible, then $(\MM_0^{(k)}(A))^{-1} = \MM_0^{(k)}(A^{-1})$.
\item Adjoint: for any $A\in \mathbb{C}^{n\times n}$, $(\MM_0^{(k)}(A))^*=\MM_0^{(k)}(A^*)$, with respect to the inner product $\langle \cdot,\cdot\rangle_{\wedge^k}$. 
\item Positiveness: If $A\in \mathcal{H}_n$, then $\MM_0^{(k)}(A)$ is Hermitian; if $A\in \mathcal{H}_n^{+}$, then $\MM_0^{(k)}(A)\succeq \mathbf{0}$; if $A\in \mathcal{H}_n^{++}$, then $\MM_0^{(k)}(A)\succ\mathbf{0}$. 

\item Product properties: for any $A,B,C,D\in \mathbb{C}^{n\times n}$, we have
\begin{subequations}
\label{eqt:prodproperty}
\begin{align}
\MM_0^{(k)}(AB) & = \MM_0^{(k)}(A)\MM_0^{(k)}(B),\\
\MM_1^{(k)}(A;B)\MM_0^{(k)}(C) &= \MM_1^{(k)}(AC;BC),\\ 
\MM_0^{(k)}(C)\MM_1^{(k)}(A;B) &= \MM_1^{(k)}(CA;CB),\\
\MM_1^{(k)}(A;C)\MM_1^{(k)}(B;D) &= \MM_2^{(k)}(AD,CB;CD) + \MM_1^{(k)}(AB;CD).
\end{align}
\end{subequations}

\item Derivative properties: for any differentiable functions $A(t),B(t):\mathbb{R}\longrightarrow\mathbb{C}^{n\times n}$, we have
\begin{subequations}
\label{eqt:deriproperty}
\begin{align}
\frac{\partial}{\partial t}\MM_0^{(k)}(A(t)) & = \MM_1^{(k)}(A'(t);A(t))\\
\frac{\partial}{\partial t}\MM_1^{(k)}(A(t);B(t)) &= \MM_1^{(k)}(A'(t);B(t)) + \MM_2^{(k)}(A(t),B'(t);B(t)).
\end{align}
\end{subequations}

\end{itemize}

Next we consider the natural basis of $\wedge^k(\mathbb{C}^n)$, 
\[\{e_{i_1}\wedge e_{i_2}\wedge\cdots\wedge e_{i_k} \}_{1\leq i_1<i_2<\cdots<i_k\leq n},\]
which is orthogonal under the inner product $\langle \cdot,\cdot\rangle_{\wedge^k}$. 
Then the trace function on $\mathcal{L}(\wedge^k(\mathbb{C}^n))$ is defined as 
\begin{align}
\trace:\quad \mathcal{L}(\wedge^k(\mathbb{C}^n))\ &\longrightarrow\ \mathbb{C}\nonumber\\
\trace\big[\mathcal{F} \big]\ &= \ \sum_{1\leq i_1<i_2<\cdots<i_k\leq n} \langle e_{i_1}\wedge e_{i_2}\wedge\cdots\wedge e_{i_k},\mathcal{F}(e_{i_1}\wedge e_{i_2}\wedge\cdots\wedge e_{i_k}) \rangle_{\wedge^k}.
\end{align}
It is not hard to check that this trace function is also invariant under cyclic permutation, i.e. $\trace\big[\mathcal{F}\mathcal{G}\big]=\trace\big[\mathcal{G}\mathcal{F}\big]$ for any $\mathcal{F},\mathcal{G}\in \mathcal{L}(\wedge^k(\mathbb{C}^n))$. Then for any $A^{(1)},\cdots,A^{(k)}\in \mathbb{C}^{n\times n}$, the trace $\trace[\mathcal{M}^{(k)}(A^{(1)},\cdots,A^{(k)})]$ coincides with the definition of the mixed discriminant, as one can check that 
\begin{align}
\trace\big[\MM^{(k)}(A^{(1)},\cdots,A^{(k)})\big]=&\ \sum_{\sigma\in S_k}\sum_{1\leq i_1<\cdots<i_k\leq n} \langle e_{i_1}\wedge\cdots\wedge e_{i_k},A^{(\sigma(1))}e_{i_1}\wedge\cdots\wedge A^{(\sigma(k))}e_{i_k} \rangle_{\wedge^k}\nonumber\\
=&\ \frac{n!}{(n-k)!}D(A^{(1)},\cdots,A^{(k)},\underbrace{I_n,\cdots,I_n}_{n-k}).
\label{eqt:extrace2mixdiscriminant}
\end{align}
From this observation, we can now express the $k$-trace of a matrix $A\in \mathbb{C}^{n\times n}$ as 
\begin{equation}
\trace_k[A] = \trace\big[\MM_0^{(k)}(A)\big].
\label{eqt:extrace2ktrace}
\end{equation} 
For those who are familiar with exterior algebra, it is clear that the spectrum of $\MM_0^{(k)}$ is just $\{\lambda_{i_1}\lambda_{i_2}\cdots\lambda_{i_k}\}_{1\leq i_1<i_2<\cdots<i_k\leq n}$, where $\lambda_1,\lambda_2,\cdots,\lambda_n$ are the eigenvalues of $A$. So in this way it is more convenient to see that $\trace\big[\MM_0^{(k)}(A)\big] = \mathrm{sum}(\text{spectrum of }\MM_0^{(k)}(A)) = \sum_{1\leq i_1<\cdots<i_k\leq n}\lambda_{i_1}\lambda_{i_2}\cdots\lambda_{i_k} =\trace_k[A]$. Our proof of \Cref{thm:GeneralizedLiebThm} will base on the expression \eqref{eqt:extrace2ktrace}. 

In fact, our proof the main theorem can be done without introducing the exterior algebra. We can instead go through the whole proof only using notations of mixed discriminant. The advantage of using exterior algebra is that it interprets the $k$-trace as the normal trace of operators in a space of higher dimension, so our $k$-trace functions have a nicer form that imitates the trace function in the original Lieb's theorem. Also for the same reason, we are able to construct our proof by following the arguments of Lieb's original proof in \cite{LIEB1973267}.

We next introduce some notations to simplify the expressions in what follows. For any $n$ real numbers $\lambda_1,\lambda_2,\cdots,\lambda_n\in \mathbb{R}$, we define the three symmetric forms 
\begin{subequations}
\label{eqt:sympolys}
\begin{align}
p^{(n,k)} =&\ \sum_{1\leq i_1<i_2<\cdots<i_k\leq n}\lambda_{i_1}\lambda_{i_2}\cdots\lambda_{i_k},\quad 1\leq k\leq n,\\
d_i^{(n,k)} =&\ \sum_{\begin{subarray}{c}1\leq j_1<j_2<\cdots<j_{k-1}\leq n\\ i\notin \{j_1,j_2,\cdots,j_{k-1}\}\end{subarray}}\lambda_{j_1}\lambda_{j_2}\cdots\lambda_{j_{k-1}},\quad 2\leq k\leq n,\ 1\leq i\leq n, \\
g_{ij}^{(n,k)} =&\ \sum_{\begin{subarray}{c}1\leq l_1<l_2<\cdots<l_{k-2}\leq n\\ i,j\notin \{l_1,l_2,\cdots,l_{k-2}\}\end{subarray}}\lambda_{l_1}\lambda_{l_2}\cdots\lambda_{l_{k-2}},\quad 3\leq k\leq n,\ 1\leq i,j\leq n,\ i\neq j.
\end{align}
\end{subequations}
For consistency, we define $d^{(n,k)}_i=1$ if $k=1$; $g_{ij}^{(n,k)}=1$ if $k=2$ and $i\neq j$; $g_{ij}^{(n,k)}=0$ if $k=1$ or $i=j$. Also we define $p^{(n,k)}=d^{(n,k)}_i=g_{ij}^{(n,k)}=0$ if $k>n$. Throughout this paper, whenever we are given some real numbers $\lambda_1,\lambda_2,\cdots,\lambda_n$, the quantities $p^{(n,k)},d^{(n,k)}_i,g_{ij}^{(n,k)}$ are always defined correspondingly with respect to $\{\lambda_i\}_{1\leq i\leq n}$. The following relations are easy to verify with the definitions above, and will be useful in our proofs of lemmas and theorems. For any $n,k$, and any $1\leq i,j\leq n$ such that $i\neq j$, we have the expansion relations
\begin{equation}
p^{(n,k)} = \lambda_id_i^{(n,k)} + d_i^{(n,k+1)}, \qquad d_i^{(n,k)} = \lambda_jg_{ij}^{(n,k)}+g_{ij}^{(n,k+1)}.
\label{eqt:expansion}
\end{equation}

With the notations defined above, we give the following lemma. The proof is straightforward by definition, so we omit it here.

\begin{lemma} 
\label{lem:impidentities}
For any $A,B\in \mathbb{C}^{n\times n}$, and any diagonal matrix $\Lambda\in\mathbb{C}^{n\times n}$ with diagonal entries $\lambda_1,\lambda_2,\cdots,\lambda_n$, we have the following identities 
\begin{subequations}
\label{eqt:impidentities}
\begin{align}
\trace\big[\MM_0^{(k)}(\Lambda)\big] &=\ p^{(n,k)},\\
\trace\big[\MM_1^{(k)}(A;\Lambda)\big] &=\ \sum_{i=1}^nA_{ii}d_i^{(n,k)},\\
\trace\big[\MM_2^{(k)}(A,B;\Lambda)\big] &=\ \sum_{1\leq i,j\leq n} (A_{ii}B_{jj}-A_{ji}B_{ij})g_{ij}^{(n,k)},
\end{align}
\end{subequations}
for all $1\leq k\leq n$, where $p^{(n,k)},d_i^{(n,k)},g_{ij}^{(n,k)}$ are defined with respect to $\lambda_1,\lambda_2,\cdots,\lambda_n$.
\end{lemma}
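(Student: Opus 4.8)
The plan is to verify all three identities by directly unfolding the definition of the trace on $\mathcal{L}(\wedge^k(\mathbb{C}^n))$ and of the operators $\MM_0^{(k)},\MM_1^{(k)},\MM_2^{(k)}$ on the natural basis, and then reindexing the resulting sums. Throughout, for a $k$-subset $I=\{i_1<\cdots<i_k\}\subseteq\{1,\dots,n\}$ I write $e_I=e_{i_1}\wedge\cdots\wedge e_{i_k}$, so that $\{e_I\}_{|I|=k}$ is an orthonormal basis of $\wedge^k(\mathbb{C}^n)$ and $\trace[\mathcal{F}]=\sum_{|I|=k}\langle e_I,\mathcal{F}e_I\rangle_{\wedge^k}$. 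The only structural facts about $\langle\cdot,\cdot\rangle_{\wedge^k}$ that I need all follow by inspecting its defining determinant: a decomposable $k$-vector with two equal factors vanishes, interchanging two factors flips the sign, and $\langle e_I,e_J\rangle_{\wedge^k}=0$ whenever $I\neq J$.

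For the first identity, since every one of the $k!$ permutations in \eqref{def:ExteriorOps} produces the same summand, $\MM_0^{(k)}(\Lambda)e_I=\Lambda e_{i_1}\wedge\cdots\wedge\Lambda e_{i_k}=\big(\prod_{l\in I}\lambda_l\big)e_I$; pairing with $e_I$ and summing over $I$ gives $p^{(n,k)}$ by \eqref{eqt:sympolys}. For the second, I will group the permutations defining $\MM_1^{(k)}(A;\Lambda)$ by the single slot $m$ in which $A$ acts ($(k-1)!$ permutations per slot), obtaining $\MM_1^{(k)}(A;\Lambda)e_I=\sum_{m=1}^k\big(\prod_{l\neq m}\lambda_{i_l}\big)\,e_{i_1}\wedge\cdots\wedge(Ae_{i_m})\wedge\cdots\wedge e_{i_k}$. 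Expanding $Ae_{i_m}=\sum_r A_{ri_m}e_r$, only the term $r=i_m$ survives the pairing with $e_I$ (any other $r$ either repeats an index of $I$ or yields a vector orthogonal to $e_I$), so $\langle e_I,\MM_1^{(k)}(A;\Lambda)e_I\rangle_{\wedge^k}=\sum_{m=1}^k A_{i_mi_m}\prod_{l\neq m}\lambda_{i_l}$; summing over $I$ and collecting the coefficient of each $A_{ii}$ produces $\sum_{i=1}^nA_{ii}\,d_i^{(n,k)}$.

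The third identity is handled the same way, now grouping the permutations defining $\MM_2^{(k)}(A,B;\Lambda)$ by the ordered pair of distinct slots $(p,q)$ in which $A$ and $B$ act ($(k-2)!$ permutations per pair), which gives $\MM_2^{(k)}(A,B;\Lambda)e_I=\sum_{p\neq q}\big(\prod_{l\neq p,q}\lambda_{i_l}\big)\,e_{i_1}\wedge\cdots\wedge(Ae_{i_p})\wedge\cdots\wedge(Be_{i_q})\wedge\cdots\wedge e_{i_k}$. Expanding $Ae_{i_p}=\sum_rA_{ri_p}e_r$ and $Be_{i_q}=\sum_sB_{si_q}e_s$ and pairing with $e_I$, the only contributing choices are $\{r,s\}=\{i_p,i_q\}$: the choice $(r,s)=(i_p,i_q)$ contributes $+A_{i_pi_p}B_{i_qi_q}$, while $(r,s)=(i_q,i_p)$ requires one transposition of factors and contributes $-A_{i_qi_p}B_{i_pi_q}$. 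Summing over $p\neq q$, then over $I$, and collecting the coefficient of $(A_{ii}B_{jj}-A_{ji}B_{ij})$ for each ordered pair $i\neq j$ yields $\sum_{1\leq i,j\leq n}(A_{ii}B_{jj}-A_{ji}B_{ij})\,g_{ij}^{(n,k)}$, the diagonal terms $i=j$ dropping out because $g_{ii}^{(n,k)}=0$.

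Finally I will dispose of the boundary cases directly: for $k=1$ one has $\MM_0^{(1)}(\Lambda)=\Lambda$, $\MM_1^{(1)}(A;\Lambda)=A$, $\MM_2^{(1)}(A,B;\Lambda)=\mathbf{0}$, and the conventions $d_i^{(n,1)}=1$, $g_{ij}^{(n,1)}=0$ make the three identities immediate; for $k=2$ the convention $\MM_2^{(2)}(A,B;\Lambda)=\MM_1^{(2)}(A;B)=\MM^{(2)}(A,B)$ together with $g_{ij}^{(n,2)}=1$ for $i\neq j$ reduces the third identity to a one-line check; and $k>n$ is vacuous. I expect the only place that needs genuine care to be the $\MM_2^{(k)}$ computation — tracking the sign produced by the transposition of factors and correctly matching the leftover index bookkeeping against the definition of $g_{ij}^{(n,k)}$ — while everything else is a mechanical unwinding of the definitions.
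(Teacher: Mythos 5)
The paper omits a proof of this lemma, saying only that it ``is straightforward by definition''; your argument is precisely the direct unwinding the authors presumably had in mind, and it is correct. You compute $\MM_0^{(k)},\MM_1^{(k)},\MM_2^{(k)}$ on the natural basis $e_I$ by grouping the $k!$ permutations in \eqref{def:ExteriorOps} according to which slot(s) the distinguished factor(s) occupy, pair against $e_I$, and observe that only the matrix entries with indices inside $I$ survive; the sign in the $\MM_2^{(k)}$ case from the transposition $(r,s)=(i_q,i_p)$ is handled correctly, the reindexing matches the definitions of $p^{(n,k)},d_i^{(n,k)},g_{ij}^{(n,k)}$ in \eqref{eqt:sympolys}, and the boundary conventions for $k=1,2$ are verified. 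No gaps.
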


\subsection{Derivatives of some matrix functions}

Let us remind ourselves that a basic but important way to prove concavity of a differentiable function $f(t)$ is by showing that $f''(t)\leq 0$. Similarly, one way to prove concavity of a differentiable multivariate function $f(\bx)$ is by showing that the second directional derivative $\frac{\partial^2}{\partial t^2}f(\bx+t\by)|_{t=0}\leq 0$ for all allowed direction $\by$. We will use this idea to prove the concavity of the $k$-trace functions \eqref{eqt:function1} and \eqref{eqt:function2}. For this purpose, we would need the following matrix derivative formulas.
\begin{itemize}
\item Consider a function $A(t):(a,b)\longrightarrow \mathcal{H}_n$, such that $A(t)$ is differentiable on $(a,b)$, then we have\textsuperscript{\cite{doi:10.1063/1.1705306}}
\begin{equation}
\label{eqt:expderivative}
\frac{\partial}{\partial t}\exp\big(A(t)\big) = \int_0^1\exp\big(sA(t)\big)A'(t)\exp\big((1-s)A(t)\big)ds.
\end{equation}
$A'(t)$ denotes the derivative of $A(t)$ with respect to $t$.
\item Consider a function $A(t):(a,b)\longrightarrow \mathcal{H}_n^{++}$, such that $A(t)$ is differentiable on $(a,b)$, then we have\textsuperscript{\cite{LIEB1973267}}
\begin{equation}
\label{eqt:inversederivative}
\frac{\partial}{\partial t} \big(A(t)\big)^{-1} = -\big(A(t)\big)^{-1}A'(t)\big(A(t)\big)^{-1},
\end{equation}
and 
\begin{equation}
\label{eqt:logderivative}
\frac{\partial}{\partial t} \ln\big(A(t)\big) = \int_0^{\infty}\big(A(t)+\tau I_n\big)^{-1}A'(t)\big(A(t)+\tau I_n\big)^{-1}d\tau.
\end{equation}
\end{itemize} 

\section{Proof of the generalized Lieb's theorem}
\label{sec:Proofofmainthm}
As mentioned before, our generalized Lieb's theorem is a joint result of the original Lieb's theorem and the Alexandrov-Fenchel inequality. But we will not use the Lieb's theorem directly. Instead, we will be using the following lemma, also due to Lieb \cite{LIEB1973267}, which is an equivalence of the Lieb's theorem. We provide the proof here only to show its connection to the Lieb's theorem.

\begin{lemma}
\label{lem:LiebThm}
Given any $A\in\mathcal{H}_n^{++}$, $C\in\mathcal{H}_n$, define 
\[T=\int_0^\infty(A+\tau I)^{-1}C(A+\tau I)^{-1}d\tau,\]
\[R=2\int_0^\infty(A+\tau I)^{-1}C(A+\tau I)^{-1}C(A+\tau I)^{-1}d\tau,\]
then for any $B\in\mathcal{H}_n^{+}$, we have
\begin{equation}
\label{eqt:LiebThm}
\int_0^1ds\trace\big[TB^sTB^{1-s}\big] - \trace\big[RB\big]\leq0.\end{equation}
\end{lemma}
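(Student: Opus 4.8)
The plan is to deduce this lemma directly from the original Lieb's theorem by a second-derivative computation along the affine path $t\mapsto A+tC$. The first step is to recognize that $T$ and $R$ are, up to sign, the first and second derivatives of the matrix logarithm along this path. Applying the log-derivative formula \eqref{eqt:logderivative} with $A(t)=A+tC$, and differentiating once more using the inverse-derivative formula \eqref{eqt:inversederivative} (differentiation under the integral sign being legitimate since all integrands decay like $\tau^{-2}$ in operator norm, uniformly for $t$ near $0$), one checks that
\[\frac{\p}{\p t}\Big|_{t=0}\ln(A+tC)=T,\qquad \frac{\p^2}{\p t^2}\Big|_{t=0}\ln(A+tC)=-R.\]

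Next, I would first treat the case $B\in\mathcal{H}_n^{++}$ and choose $H=\ln B-\ln A\in\mathcal{H}_n$, so that $\exp(H+\ln A)=B$. Set $G(t)=H+\ln(A+tC)$, so $\exp(G(0))=B$, $G'(0)=T$, $G''(0)=-R$, and consider the scalar function $f(t)=\trace\big[\exp(G(t))\big]=\trace\big[\exp(H+\ln(A+tC))\big]$. Since $t\mapsto A+tC$ is affine and, by the Lieb's theorem, the map $A'\mapsto\trace[\exp(H+\ln A')]$ is concave on $\mathcal{H}_n^{++}$, the function $f$ is concave near $t=0$; in particular $f''(0)\le0$.

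Then I would compute $f''(0)$ using the exponential-derivative formula \eqref{eqt:expderivative}. Because $\exp(sG)\exp((1-s)G)=\exp(G)$, the cyclic invariance of the trace gives $f'(t)=\trace[G'(t)\exp(G(t))]$, and differentiating once more,
\[f''(t)=\trace\big[G''(t)\exp(G(t))\big]+\int_0^1\trace\big[G'(t)\exp(sG(t))\,G'(t)\exp((1-s)G(t))\big]\,ds,\]
the second term arising from substituting \eqref{eqt:expderivative} into $\trace[G'(t)\tfrac{\p}{\p t}\exp(G(t))]$. Evaluating at $t=0$ with $G'(0)=T$, $G''(0)=-R$, $\exp(G(0))=B$, and using $\exp(sG(0))=B^{s}$, we get $f''(0)=-\trace[RB]+\int_0^1\trace[TB^sTB^{1-s}]\,ds\le0$, which is \eqref{eqt:LiebThm} for positive definite $B$. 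For general $B\in\mathcal{H}_n^{+}$ I would apply this to $B+\varepsilon I_n\in\mathcal{H}_n^{++}$ and let $\varepsilon\to0^{+}$: the term $\trace[R(B+\varepsilon I_n)]$ is continuous in $\varepsilon$, and $\int_0^1\trace[T(B+\varepsilon I_n)^sT(B+\varepsilon I_n)^{1-s}]\,ds$ is continuous at $\varepsilon=0$ by dominated convergence, since $(B+\varepsilon I_n)^s\to B^s$ for each $s\in(0,1)$ while the integrand stays bounded by $n\|T\|^2(\|B\|+1)$ uniformly in $s\in[0,1]$ and $\varepsilon\in[0,1]$.

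I expect the main nuisance to be bookkeeping rather than any genuine obstacle: one must carefully justify the two differentiations under integral signs (in the formulas for $\ln(A+tC)$ and for $\exp(G(t))$) and correctly collect the terms produced when $\tfrac{d}{dt}\exp(G(t))$ is differentiated again, while the substantive content — concavity of $f$ — is imported wholesale from the Lieb's theorem.
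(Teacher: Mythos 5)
Your proof is correct and mirrors the paper's own argument step for step: both choose $H=\ln B-\ln A$, identify $T$ and $R$ as the first and second $t$-derivatives of $\ln(A+tC)$ at $t=0$, compute the second derivative of $t\mapsto\trace[\exp(H+\ln(A+tC))]$ via the formulas \eqref{eqt:expderivative}, \eqref{eqt:inversederivative}, \eqref{eqt:logderivative}, invoke Lieb's theorem for concavity to conclude $f''(0)\le0$, and finish with a continuity argument for $B\in\mathcal{H}_n^{+}$. The only cosmetic difference is that you state the justifications for differentiating under the integral signs and the dominated-convergence limit more explicitly than the paper does.
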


\begin{proof}
By Lieb's theorem (Theorem 6 \cite{LIEB1973267}), for any $H\in\mathcal{H}_n$, the function $g(t)=\trace\big[\exp(H+\ln(A+tC))\big]$ is concave. Also this function is smooth in $t$ for $t$ small enough such that $A+tC\in\mathcal{H}_n^{++}$. Thus we have $\frac{\partial^2}{\partial t^2}g(t)|_{t=0}=g''(0)\leq 0$. Write $B(t)=\exp(H+\ln(A+tC))$, and 
\[T(t) = \int_0^\infty(A+tC+\tau I)^{-1}C(A+tC+\tau I)^{-1}d\tau,\]
\[R(t) = 2\int_0^\infty(A+tC+\tau I)^{-1}C(A+tC+\tau I)^{-1}C(A+tC+\tau I)^{-1}d\tau.\] 
It is easy to check that $\frac{\partial}{\partial t}\ln(A+tC)=T(t)$, $T'(t)=-R(t)$ by formulas \eqref{eqt:inversederivative} and \eqref{eqt:logderivative}. Then using the derivative formulas \eqref{eqt:expderivative}, \eqref{eqt:inversederivative} and \eqref{eqt:logderivative}, we have
\begin{align*}{}
g'(t) = \int_0^1ds\trace\big[(B(t))^sT(t)(B(t))^{1-s}\big]= \trace\big[T(t)B(t)\big],
\end{align*}
and
\[g''(t) = \trace\big[T'(t)B(t)\big] + \int_0^1ds\trace\big[T(t)(B(t))^sT(t)(B(t))^{1-s}\big].\]
For any $B\in \mathcal{H}_n^{++}$, we may choose $H=\ln B-\ln A$, so that $B(0)=\exp(H+\ln A)=B$. And notice that $T(0)=T,R(0)=R$, we thus have
\[-\trace\big[RB\big]+\int_0^1ds\trace\big[TB^sTB^{1-s}\big]=g''(0)\leq0.\]
The extension to $B\in \mathcal{H}_n^{+}$ can be done by continuity.
\end{proof}

We use this variant of the Lieb's theorem since it is more convenient for us to choose arbitrary $B\in\mathcal{H}_n^{++}$ in inequality \eqref{eqt:LiebThm}. In particular, if we choose $B$ to be diagonal with diagonal entries $b_1,b_2,\cdots,b_n$, then \Cref{lem:LiebThm} implies that 
\begin{equation}
\label{eqt:LiebThm2}
\sum_{i=1}^nR_{ii}b_i\geq \int_0^1ds\sum_{i=1}^n\sum_{j=1}^nT_{ij}b_j^sT_{ji}b_i^{1-s},
\end{equation}
which is a critical estimate that we will be using.

We now prove a trace inequalities using \Cref{lem:LiebThm} and the Alexandrov-Fenchel inequality \Cref{thm:AFinequality}. This inequality can be seen as a generalization of \Cref{lem:LiebThm} from $k=1$ to all $1\leq k\leq n$.

\begin{lemma} 
\label{lem:TraceIneq}
For arbitrary $A\in \mathcal{H}_n^{++},B\in \mathcal{H}_n^{+},C\in\mathcal{H}_n$, let
\[T = \int_0^\infty(A+\tau I)^{-1}C(A+\tau I)^{-1}d\tau,\]
\[R = 2\int_0^\infty(A+\tau I)^{-1}C(A+\tau I)^{-1}C(A+\tau I)^{-1}d\tau,\] 
then we have, for all $1\leq k\leq n$,
\begin{align}
&\ \int_0^1ds\trace\big[\MM_1^{(k)}(TB^s;B^s)\MM_1^{(k)}(TB^{1-s};B^{1-s})\big] -\trace\big[\MM_1^{(k)}(RB;B)\big] \leq \trace\big[\MM_2^{(k)}(TB,TB,B)\big]. \label{eqt:TraceIneq}
\end{align}

\end{lemma}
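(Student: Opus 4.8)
The plan is to reduce \eqref{eqt:TraceIneq} to a scalar inequality about the entries of $T$ and $R$ by diagonalizing $B$, then to derive that scalar inequality from the diagonal form \eqref{eqt:LiebThm2} of \Cref{lem:LiebThm} fed with well-chosen weights, together with one elementary superadditivity estimate (this last step being where the Alexandrov--Fenchel circle of ideas enters).

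First I would reduce to $B=\Lambda=\mathrm{diag}(b_1,\dots,b_n)$ diagonal. Assume first $B\in\mathcal{H}_n^{++}$, so $b_i>0$; the case $B\in\mathcal{H}_n^{+}$ then follows by continuity, since $B\mapsto B^s$ is continuous on $\mathcal{H}_n^{+}$ and each of the three traces in \eqref{eqt:TraceIneq} depends continuously on $B$. Writing $B=U\Lambda U^*$ and using the product rules \eqref{eqt:prodproperty}, the identity $\MM_0^{(k)}(U^*)\MM_0^{(k)}(U)=\mathrm{Id}$, and the cyclic invariance of the trace on $\mathcal{L}(\wedge^k(\mathbb{C}^n))$, one checks that each of the three traces is unchanged when $(T,R,B)$ is replaced by $(U^*TU,\,U^*RU,\,\Lambda)$, and $U^*TU,U^*RU$ are still the matrices $T,R$ produced by the defining formulas from $U^*AU\in\mathcal{H}_n^{++}$ and $U^*CU\in\mathcal{H}_n$. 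Then I would expand, via the last identity in \eqref{eqt:prodproperty},
\[
\MM_1^{(k)}(T\Lambda^s;\Lambda^s)\,\MM_1^{(k)}(T\Lambda^{1-s};\Lambda^{1-s})=\MM_2^{(k)}(T\Lambda,\Lambda^sT\Lambda^{1-s};\Lambda)+\MM_1^{(k)}(T\Lambda^sT\Lambda^{1-s};\Lambda),
\]
apply the trace identities of \Cref{lem:impidentities} to all three traces in \eqref{eqt:TraceIneq}, and integrate in $s\in[0,1]$ (using $\int_0^1 b_i^{1+s}b_j^{1-s}\,ds=b_i\ell(b_i,b_j)$ and $\int_0^1 b_j^{s}b_i^{1-s}\,ds=\ell(b_i,b_j)$, where $\ell(x,y):=\int_0^1 x^sy^{1-s}\,ds$). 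The summand $\sum_{i,j}T_{ii}T_{jj}b_ib_j g_{ij}^{(n,k)}$ then appears on both sides and cancels, and using the second expansion relation in \eqref{eqt:expansion}, namely $d_i^{(n,k)}=b_j g_{ij}^{(n,k)}+g_{ij}^{(n,k+1)}$, together with $|T_{ij}|^2=|T_{ji}|^2$ to symmetrize in $i,j$, inequality \eqref{eqt:TraceIneq} collapses to the scalar inequality
\[
\sum_i R_{ii}b_i d_i^{(n,k)}\ \geq\ \sum_i T_{ii}^2 b_i d_i^{(n,k)}+\sum_{i\neq j}|T_{ij}|^2\big(b_ib_j g_{ij}^{(n,k)}+\ell(b_i,b_j)g_{ij}^{(n,k+1)}\big).
\]

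To finish I would apply \eqref{eqt:LiebThm2} with the positive weights $w_i:=b_i d_i^{(n,k)}$ in place of $b_i$ (legitimate because \eqref{eqt:LiebThm2} holds for an arbitrary nonnegative diagonal $B$, and $d_i^{(n,k)}>0$ for $1\le k\le n$), which gives $\sum_i R_{ii}w_i\geq\sum_i T_{ii}^2 w_i+\sum_{i\neq j}|T_{ij}|^2\ell(w_i,w_j)$. Comparing with the previous display, it remains only to check, for $i\neq j$, that $\ell(b_i d_i^{(n,k)},b_j d_j^{(n,k)})\geq b_ib_j g_{ij}^{(n,k)}+\ell(b_i,b_j)g_{ij}^{(n,k+1)}$. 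Since $b_i d_i^{(n,k)}=p+b_iq$ and $b_j d_j^{(n,k)}=p+b_jq$ with $p:=b_ib_j g_{ij}^{(n,k)}\geq0$ and $q:=g_{ij}^{(n,k+1)}\geq0$, this is exactly the superadditivity $\ell(p+b_iq,\,p+b_jq)\geq\ell(p,p)+\ell(b_iq,b_jq)$, which holds because $(x,y)\mapsto x^sy^{1-s}$ is concave and positively homogeneous of degree one, hence superadditive on the positive quadrant, and $\ell$ is its average over $s\in[0,1]$. (Alternatively one can route this residual inequality through the Alexandrov--Fenchel inequality \Cref{thm:AFinequality}, since $d_i^{(n,k)}$ and $g_{ij}^{(n,k)}$ are mixed discriminants of $\Lambda$ with copies of $I_n$.)

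I expect the main obstacle to be the expansion step: organizing the contributions of the three operator traces after inserting \Cref{lem:impidentities} and integrating in $s$, and in particular recognizing that the right-hand term $\trace[\MM_2^{(k)}(TB,TB;B)]$ is precisely what cancels the unwanted $\sum_{i,j}T_{ii}T_{jj}b_ib_j g_{ij}^{(n,k)}$ and, once the expansion relation is used, supplies the $b_ib_j g_{ij}^{(n,k)}$ piece and absorbs the $-b_i g_{ij}^{(n,k)}$ correction, leaving the clean scalar form. Once the inequality is in that scalar form, the only genuine input is the weighted instance of \eqref{eqt:LiebThm2} plus the elementary superadditivity of the logarithmic mean.
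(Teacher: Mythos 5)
Your proof is correct and takes essentially the same route as the paper: reduce to diagonal $B$, expand with \Cref{lem:impidentities}, feed \eqref{eqt:LiebThm2} the modified weights $w_i=b_i d_i^{(n,k)}$, and close with a scalar Hölder/superadditivity estimate combined with the expansion relation $d_i^{(n,k)}=b_j g_{ij}^{(n,k)}+g_{ij}^{(n,k+1)}$. The only cosmetic differences are that you integrate over $s$ first and phrase the final step as superadditivity of $\ell(x,y)=\int_0^1 x^s y^{1-s}\,ds$, whereas the paper applies the pointwise-in-$s$ inequality $(a+b)^s(c+d)^{1-s}\geq a^s c^{1-s}+b^s d^{1-s}$ inside the integrand and then integrates; these are the same elementary fact.
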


\begin{proof} 
We first claim that we only need to consider the case when $B=\Lambda$ is a diagonal matrix with all diagonal entries $\lambda_1,\lambda_2,\cdots,\lambda_n\geq0$. Indeed, if $B$ is not diagonal, we consider its eigenvalue decomposition $B=U\Lambda U^T$, where $U\in\mathbb{C}^{n \times n}$ is unitary, and $\Lambda$ is a diagonal matrix whose diagonal entries $\lambda_1,\lambda_2,\cdots,\lambda_n$ are the eigenvalues of $B$. Since $B\in \mathcal{H}_n^{+}$, $\lambda_1,\lambda_2,\cdots,\lambda_n\geq 0$. If we introduce $\widetilde{A}=U^TAU,\widetilde{C}=U^TCU,\widetilde{T}=U^TTU,\widetilde{R}=U^TRU$, we have
\[\widetilde{T} = \int_0^\infty(\widetilde{A}+\tau I)^{-1}\widetilde{C}(\widetilde{A}+\tau I)^{-1}d\tau,\]
\[\widetilde{R} = 2\int_0^\infty(\widetilde{A}+\tau I)^{-1}\widetilde{C}(\widetilde{A}+\tau I)^{-1}\widetilde{C}(\widetilde{A}+\tau I)^{-1}d\tau.\] 
Then using the  cyclic invariance of trace and the product properties \eqref{eqt:prodproperty}, we have, for example,
\begin{align*}
&\ \trace\big[\MM_1^{(k)}(TB^s;B^s)\MM_1^{(k)}(TB^{1-s};B^{1-s})\big]\\
=&\ \trace\big[\MM_1^{(k)}(UU^TTU\Lambda^sU^T;U\Lambda^sU^T)\MM_1^{(k)}(UU^TTU\Lambda^{1-s}U^T;U\Lambda^{1-s}U^T)\big]\\
=&\ \trace\big[\MM_0^{(k)}(U)\MM_1^{(k)}(\widetilde{T}\Lambda^s;\Lambda^s)\MM_0^{(k)}(U^T)\MM_0^{(k)}(U)\MM_1^{(k)}(\widetilde{T}\Lambda^{1-s};\Lambda^{1-s})\MM_0^{(k)}(U^T)\big]\\
=&\ \trace\big[\MM_1^{(k)}(\widetilde{T}\Lambda^s;\Lambda^s)\MM_0^{(k)}(U^T)\MM_0^{(k)}(U)\MM_1^{(k)}(\widetilde{T}\Lambda^{1-s};\Lambda^{1-s})\MM_0^{(k)}(U^T)\MM_0^{(k)}(U)\big]\\
=&\ \trace\big[\MM_1^{(k)}(\widetilde{T}\Lambda^s;\Lambda^s)\MM_1^{(k)}(\widetilde{T}\Lambda^{1-s};\Lambda^{1-s})\big].
\end{align*}
Using the same trick to the other terms in the inequalities \eqref{eqt:TraceIneq}, one can show that \eqref{eqt:TraceIneq} is equivalent to 
\begin{equation*}
\int_0^1ds\trace\big[\MM_1^{(k)}(\widetilde{T}\Lambda^s;\Lambda^s)\MM_1^{(k)}(\widetilde{T}\Lambda^{1-s};\Lambda^{1-s})\big]-\trace\big[\MM_1^{(k)}(\widetilde{R}\Lambda;\Lambda)\big] \leq \trace\big[\MM_2^{(k)}(\widetilde{T}\Lambda,\widetilde{T}\Lambda,\Lambda)\big].
\end{equation*}
which justifies our claim. In what follows, we will still use $A,C,T,R$ for $\widetilde{A},\widetilde{C},\widetilde{T},\widetilde{R}$. 

We now prove \eqref{eqt:TraceIneq} with $B=\Lambda$ being diagonal whose diagonal entries are $\lambda_1,\lambda_2,\cdots,\lambda_n\geq0$. Using product properties \eqref{eqt:prodproperty} and identities in \Cref{lem:impidentities}, we rewrite the quantity
\begin{align*}
\mathcal{I}\triangleq &\ \int_0^1ds\trace\big[\MM_1^{(k)}(T\Lambda^s;\Lambda^s)\MM_1^{(k)}(T\Lambda^{1-s};\Lambda^{1-s})\big]-\trace\big[\MM_1^{(k)}(R\Lambda;\Lambda)\big]\\
=&\ \int_0^1ds\left\{\trace\big[\MM_1^{(k)}(T\Lambda^sT\Lambda^{1-s};\Lambda)\big] + \trace\big[\MM_2^{(k)}(T\Lambda^s\Lambda^{1-s},\Lambda^sT\Lambda^{1-s};\Lambda) \big]\right\} -\trace\big[\MM_1^{(k)}(R\Lambda;\Lambda)\big]\\
=&\ \int_0^1ds\left\{\sum_{i=1}^n\big(\sum_{j=1}^nT_{ij}\lambda_j^sT_{ji}\lambda_i^{1-s}\big)d_i^{(n,k)} + \sum_{1\leq i,j\leq n}(T_{ii}\lambda_i\lambda_j^sT_{jj}\lambda_j^{1-s}-T_{ji}\lambda_i\lambda_i^sT_{ij}\lambda_j^{1-s})g_{ij}^{(n,k)}\right\}\\
&\ - \sum_{i=1}^nR_{ii}\lambda_id_i^{(n,k)}.
\end{align*}
Then replacing $b_i$ by $\lambda_id_i^{(n,k)}$ in \eqref{eqt:LiebThm2}, we have by \Cref{lem:LiebThm}
\[\sum_{i=1}^nR_{ii}\lambda_id_i^{(n,k)}\geq \int_0^1ds\sum_{i=1}^n\sum_{j=1}^nT_{ij}(\lambda_jd_j^{(n,k)})^sT_{ji}(\lambda_id_i^{(n,k)})^{1-s}.\]
Therefore we have
\begin{align*}
\mathcal{I}\leq &\ \int_0^1ds\Big\{\sum_{1\leq i,j\leq n}T_{ij}T_{ji}\lambda_j^s\lambda_i^{1-s}d_i^{(n,k)} + \sum_{1\leq i,j\leq n}(T_{ii}T_{jj}\lambda_i\lambda_j-T_{ji}T_{ij}\lambda_i^{1+s}\lambda_j^{1-s})g_{ij}^{(n,k)}\\
&\ \qquad\qquad- \sum_{1\leq i,j\leq n}T_{ij}T_{ji}(\lambda_jd_j^{(n,k)})^s(\lambda_id_i^{(n,k)})^{1-s}\Big\}.
\end{align*}
We now investigate the integrant for any $s\in[0,1]$. We have
\begin{align*}
&\ \sum_{1\leq i,j\leq n}T_{ij}T_{ji}\lambda_j^s\lambda_i^{1-s}d_i^{(n,k)} + \sum_{1\leq i,j\leq n}(T_{ii}T_{jj}\lambda_i\lambda_j-T_{ji}T_{ij}\lambda_i^{1+s}\lambda_j^{1-s})g_{ij}^{(n,k)}\\
&\ -\sum_{1\leq i,j\leq n}T_{ij}T_{ji}(\lambda_jd_j^{(n,k)})^s(\lambda_id_i^{(n,k)})^{1-s}\\
=&\ \sum_{i=1}^nT_{ii}\lambda_id_i^{(n,k)} + \sum_{1\leq i<j\leq n}|T_{ij}|^2(\lambda_j^s\lambda_i^{1-s}d_i^{(n,k)}+\lambda_i^s\lambda_j^{1-s}d_j^{(n,k)})\\
&\ + \sum_{1\leq i,j\leq n}T_{ii}T_{jj}\lambda_i\lambda_jg_{ij}^{(n,k)} - \sum_{1\leq i<j\leq n}|T_{ij}|^2(\lambda_i^{1+s}\lambda_j^{1-s}+\lambda_j^{1+s}\lambda_i^{1-s})g_{ij}^{(n,k)}\\
&\ - \sum_{i=1}^nT_{ii}\lambda_id_i^{(n,k)} - \sum_{1\leq i<j\leq n}|T_{ij}|^2(\lambda_j^s\lambda_i^{1-s}(d_j^{(n,k)})^s(d_i^{(n,k)})^{1-s}+\lambda_i^s\lambda_j^{1-s}(d_i^{(n,k)})^s(d_j^{(n,k)})^{1-s})\\
=&\ \sum_{1\leq i,j\leq n}T_{ii}T_{jj}\lambda_i\lambda_jg_{ij}^{(n,k)}\\
&\ + \sum_{1\leq i<j\leq n}|T_{ij}|^2\Big\{\lambda_j^s\lambda_i^{1-s}d_i^{(n,k)}+\lambda_i^s\lambda_j^{1-s}d_j^{(n,k)}-(\lambda_i^{1+s}\lambda_j^{1-s}+\lambda_j^{1+s}\lambda_i^{1-s})g_{ij}^{(n,k)} \\
&\ \qquad\qquad\qquad\qquad - \lambda_j^s\lambda_i^{1-s}(d_j^{(n,k)})^s(d_i^{(n,k)})^{1-s} - \lambda_i^s\lambda_j^{1-s}(d_i^{(n,k)})^s(d_j^{(n,k)})^{1-s}\Big\}\\
\leq&\ \sum_{1\leq i,j\leq n}T_{ii}T_{jj}\lambda_i\lambda_jg_{ij}^{(n,k)} - 2\sum_{1\leq i<j\leq n}|T_{ij}|^2\lambda_i\lambda_jg_{ij}^{(n,k)}.
\end{align*}
We have used $g_{ij}^{(n,k)}=g_{ji}^{(n,k)}$. The proof of the last inequality above is as follows. For any $s\in [0,1]$, we have a Holder-type inequality for scalars: 
\[(a+b)^s(c+d)^{1-s}\geq a^sc^{1-s}+b^sd^{1-s},\quad a,b,c,d\geq 0.\]
Then using the expansion relations \eqref{eqt:expansion},
\[d_i^{(n,k)}=\lambda_jg_{ij}^{(n,k)}+g_{ij}^{(n,k+1)},\qquad d_j^{(n,k)}=\lambda_ig_{ij}^{(n,k)}+g_{ij}^{(n,k+1)},\]
we have
\begin{align*}
&\ \lambda_j^s\lambda_i^{1-s}d_i^{(n,k)}+\lambda_i^s\lambda_j^{1-s}d_j^{(n,k)} - (\lambda_i^{1+s}\lambda_j^{1-s}+\lambda_j^{1+s}\lambda_i^{1-s})g_{ij}^{(n,k)}\\
&\  - \lambda_j^s\lambda_i^{1-s}(d_j^{(n,k)})^s(d_i^{(n,k)})^{1-s} - \lambda_i^s\lambda_j^{1-s}(d_i^{(n,k)})^s(d_j^{(n,k)})^{1-s}\\
\leq&\ \lambda_j^s\lambda_i^{1-s}(\lambda_jg_{ij}^{(n,k)}+g_{ij}^{(n,k+1)})+\lambda_i^s\lambda_j^{1-s}(\lambda_ig_{ij}^{(n,k)}+g_{ij}^{(n,k+1)}) - (\lambda_i^{1+s}\lambda_j^{1-s}+\lambda_j^{1+s}\lambda_i^{1-s})g_{ij}^{(n,k)}\\
&\ - \lambda_j^s\lambda_i^{1-s}(\lambda_i^s\lambda_j^{1-s}g_{ij}^{(n,k)}+g_{ij}^{(n,k+1)}) - \lambda_i^s\lambda_j^{1-s}(\lambda_j^s\lambda_i^{1-s}g_{ij}^{(n,k)}+g_{ij}^{(n,k+1)})\\
=&\ -2\lambda_i\lambda_jg_{ij}^{(n,k)}.
\end{align*}
Finally using \Cref{lem:impidentities} again, we have 
\begin{align*}
\mathcal{I} \leq&\ \int_0^1ds\left\{\sum_{1\leq i,j\leq n}T_{ii}T_{jj}\lambda_i\lambda_jg_{ij}^{(n,k)} - 2\sum_{1\leq i<j\leq n}|T_{ij}|^2\lambda_i\lambda_jg_{ij}^{(n,k)}\right\}\\
=&\ \sum_{1\leq i,j\leq n}(T_{ii}T_{jj}\lambda_i\lambda_j-T_{ij}T_{ji}\lambda_i\lambda_j)g_{ij}^{(n,k)}. \\
=&\ \trace\big[\MM_2^{(k)}(T\Lambda,T\Lambda,\Lambda)\big].
\end{align*}

\end{proof}

We are now ready to prove \Cref{thm:GeneralizedLiebThm} with all established results.

\begin{proof}[\rm\textbf{Proof of \Cref{thm:GeneralizedLiebThm}}] We first prove the concavity of functions $f_{H,k}(A) = \big(\trace_k \big[\exp\big(H+\ln A\big)\big]\big)^\frac{1}{k}$.
Notice that given any $A\in \mathcal{H}_n^{++}$ and any $C\in \mathcal{H}_n$, there exist some $\epsilon$ such that $A+tC\in\mathcal{H}_n^{++}$ for $t\in(-\epsilon,\epsilon)$, and $f_{H,k}(A+tC)$ is continuously differentiable with respect to $t$ on $(-\epsilon,\epsilon)$. In what follows, any function of $t$ is always assumed to be defined on a reasonable neighborhood of $0$ (so that $A+tC\in \mathcal{H}_n^{++}$). 

Then the concavity of $f_{H,k}(A)$ on $\mathcal{H}_n^{++}$ is equivalently to the statement that $\frac{\partial^2}{\partial t^2}f_{H,k}(A+tC)\leq 0|_{t=0}$ for all choices of $A\in\mathcal{H}_n^{++},C\in\mathcal{H}_n$. Now fix a pair $A,C$, define $B(t) = \exp\big(H+\ln (A+tC)\big)\in \mathcal{H}_n^{++}$ and $g(t) = \trace_k \big[\exp\big(H+\ln (A+tC)\big)\big] = \trace\big[\MM_0^{(k)}(B(t))\big]>0$. Since $f_{H,k}(A+tC)=g(t)^\frac{1}{k}$, and 
\[\frac{\partial^2}{\partial t^2}f_{H,k}(A+tC)=\frac{1}{k}g(t)^{\frac{1}{k}-2}\big(g''(t)g(t)-\frac{k-1}{k}(g'(t))^2\big),\]
we then need to show that $g(0)g''(0)\leq\frac{k-1}{k}(g'(0))^2$. Using the derivative formulas \eqref{eqt:inversederivative} and \eqref{eqt:logderivative}, we have  
\[\frac{\partial}{\partial t}\ln (A+tC) = \int_0^\infty(A+tC+xI_n)^{-1}C(A+tC+xI_n)^{-1}\triangleq T(t),\]
\[\frac{\partial}{\partial t}T(t) = -2\int_0^\infty(A+tC+xI_n)^{-1}C(A+tC+xI_n)^{-1}C(A+tC+xI_n)^{-1} \triangleq -R(t).\] 
Then using formula \eqref{eqt:expderivative}, we can compute the first derivative
\begin{align*}
g'(t) =&\ \frac{\partial}{\partial t}\trace\big[\MM_0^{(k)}(B(t))\big]\\
=&\ \trace \big[\MM_1^{(k)}(B'(t);B(t))\big]\\
=&\ \trace \big[\MM_1^{(k)}\big(\int_0^1dsB(t)^sT(t)B(t)^{1-s};B(t)\big)\big]\\
=&\ \int_0^1ds\trace \big[\MM_1^{(k)}\big(B(t)^sT(t)B(t)^{1-s};B(t)^sB(t)^{1-s}\big)\big]\\
=&\ \int_0^1ds\trace\big[\MM_0^{(k)}(B(t)^s)\MM_1^{(k)}(T(t);I_n)\MM_0^{(k)}(B(t)^{1-s})\big]\\
=&\ \int_0^1ds\trace\big[\MM_1^{(k)}(T(t);I_n)\MM_0^{(k)}(B(t)^{1-s})\MM_0^{(k)}(B(t)^s)\big]\\
=&\ \trace\big[\MM_1^{(k)}(T(t);I_n)\MM_0^{(k)}(B(t))\big].
\end{align*}
We have used the fact that $\MM_1^{(k)}(X;Y)$ is linear in $X$, and so we can pull out the integral symbol. Then the second derivative is
\begin{align*}
g''(t) =&\ \frac{\partial}{\partial t}\trace\big[\MM_1^{(k)}(T(t);I_n)\MM_0^{(k)}(B(t))\big] \\
=&\ \trace\big[\MM_1^{(k)}(T(t);I_n)\MM_1^{(k)}(B'(t);B(t))\big] + \trace\big[\MM_1^{(k)}(T'(t);I_n)\MM_0^{(k)}(B(t))\big]  \\
=&\ \int_0^1ds\trace\big[\MM_1^{(k)}(T(t);I_n)\MM_0^{(k)}(B(t)^s)\MM_1^{(k)}(T(t);I_n)\MM_0^{(k)}(B(t)^{1-s})\big] \\
&\ - \trace\big[\MM_1^{(k)}(R(t);I_n)\MM_0^{(k)}(B(t))\big].
\end{align*}
Write $T=T(0)$, $R=R(0)$ and $B=B(0)$. Then using definitions \eqref{eqt:threeops}, identity \eqref{eqt:extrace2mixdiscriminant}, the Alexandrov-Fenchel inequality (\Cref{thm:AFinequality}) and \Cref{lem:TraceIneq}, we have
\begin{align*}
&\ g(0)g''(0) \\
=&\ \trace\big[\MM_0^{(k)}(B)\big]\Big\{\int_0^1ds\trace\big[\MM_1^{(k)}(T;I_n)\MM_0^{(k)}(B^s)\MM_1^{(k)}(T;I_n)\MM_0^{(k)}(B^{1-s})\big] \\
&\ \qquad\qquad\qquad\quad - \trace\big[\MM_1^{(k)}(R;I_n)\MM_0^{(k)}(B)\big]\Big\}\\
=&\ \trace\big[\MM_0^{(k)}(B)\big]\Big\{\int_0^1ds\trace\big[\MM_1^{(k)}(TB^s;B^s)\MM_1^{(k)}(TB^{1-s};B^{1-s})\big] \\
&\ \qquad\qquad\qquad\qquad\qquad - \trace\big[\MM_1^{(k)}(RB;B)\big]\Big\} \\
\leq&\ \trace\big[\MM_0^{(k)}(B)\big]\trace\big[\MM_2^{(k)}(TB,TB,B)] \\
=&\ \frac{n!}{k!(n-k)!}D(\underbrace{B,\cdots,B}_{k},\underbrace{I_n,\cdots,I_n}_{n-k})\cdot \frac{n!}{(k-2)!(n-k)!}D(TB,TB,\underbrace{B\cdots,B}_{k-2},\underbrace{I_n,\cdots,I_n}_{n-k})\\
\leq& \frac{k-1}{k}\Big(\frac{n!}{(k-1)!(n-k)!}D(TB,\underbrace{B\cdots,B}_{k-1},\underbrace{I_n,\cdots,I_n}_{n-k})\Big)^2 \\
=&\ \frac{k-1}{k}\trace\big[\MM_1^{(k)}(TB,B)\big]^2. \\
=&\ \frac{k-1}{k}(g'(0))^2.
\end{align*}
The concavity of $f_{H,k}(A)$ on $\mathcal{H}_n^{++}$ then follows. 

Next we prove the equivalence of (i) the concavity of the functions $f_{H,k}(A)$ on $\mathcal{H}_n^{++}$ and (ii) the concavity of the functions $\tilde{f}_{H,k}= \ln\trace_k \big[\exp\big(H+\ln A\big)\big]$ on $\mathcal{H}_n^{++}$. (i) $\Rightarrow$ (ii) is trivial. To prove (ii) $\Rightarrow$ (i), we need the following lemma.

\begin{lemma}
\label{lem:homogeneous}
Let function $f:(0,+\infty)^n\rightarrow(0,+\infty)$ be homogeneous of order $s>0$, i.e. $f(\lambda x) = \lambda^sf(x)$, for $\lambda> 0$. Then $f(x)^\frac{1}{s}$ is concave if and only if $\ln f(x)$ is concave.
\end{lemma}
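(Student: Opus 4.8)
The plan is to prove the equivalence directly from the definition of concavity, exploiting homogeneity to reduce a statement about the restriction to rays, which is trivial by homogeneity, to a statement about general convex combinations. The forward direction is the easy one: if $f(x)^{1/s}$ is concave on the (convex) cone $(0,+\infty)^n$, then since $t \mapsto t^s$ is increasing and convex on $(0,+\infty)$, and $\ln t = s\ln(t^{1/s})\cdot\frac{1}{s}\cdot s$... more simply, $\ln f(x) = s\ln(f(x)^{1/s})$, and $\ln$ is concave and increasing, so $\ln f(x)$ is a concave-increasing function of the concave function $f(x)^{1/s}$, hence concave. That takes one line.

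For the reverse direction, suppose $\ln f$ is concave; I want $g(x) := f(x)^{1/s}$ concave. Fix $x,y \in (0,+\infty)^n$ and $\tau\in[0,1]$, and set $u = g(x)$, $v = g(y)$. The idea is to rescale: by homogeneity, $g(\lambda x) = \lambda g(x)$, so $g(x/u) = 1$ and $g(y/v) = 1$, i.e. $f(x/u) = f(y/v) = 1$. Now write the convex combination $\tau x + (1-\tau) y$ as a positive multiple of a convex combination of the normalized points $x/u$ and $y/v$: precisely, with $w = \tau u + (1-\tau)v$, one has
\[
\tau x + (1-\tau) y = w\left(\frac{\tau u}{w}\cdot\frac{x}{u} + \frac{(1-\tau)v}{w}\cdot\frac{y}{v}\right),
\]
and the bracket is a convex combination of $x/u$ and $y/v$ with weights $\mu := \tau u/w$ and $1-\mu = (1-\tau)v/w$. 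By concavity of $\ln f$ applied to these normalized points,
\[
\ln f\!\left(\mu\,\tfrac{x}{u} + (1-\mu)\tfrac{y}{v}\right) \geq \mu\ln f\!\left(\tfrac{x}{u}\right) + (1-\mu)\ln f\!\left(\tfrac{y}{v}\right) = 0,
\]
so $f$ of the bracketed point is $\geq 1$. Applying homogeneity ($f(w\cdot) = w^s f(\cdot)$) and then taking $s$-th roots gives
\[
g(\tau x + (1-\tau)y) \geq w = \tau u + (1-\tau) v = \tau g(x) + (1-\tau) g(y),
\]
which is the desired concavity of $g$.

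I expect no genuine obstacle here — the only point requiring a little care is the bookkeeping of the rescaling identity and checking that $\mu\in[0,1]$ (clear, since $\tau u, (1-\tau)v \geq 0$ and they sum to $w$; and $w>0$ whenever $(\tau,1-\tau)\neq(0,0)$ isn't even needed since the endpoint cases $\tau\in\{0,1\}$ are trivial). One should also note that the hypothesis $f>0$ guarantees $u,v>0$, so the divisions $x/u$, $y/v$ are legitimate and stay in $(0,+\infty)^n$, and that the cone $(0,+\infty)^n$ is closed under the operations used. The mild subtlety worth flagging is that concavity of $\ln f$ was only invoked on the normalized slice, but that is exactly where homogeneity does its work: it trades the full two-parameter convex combination for a one-parameter one on the level set $\{f = 1\}$, after which homogeneity reconstructs the general case for free.
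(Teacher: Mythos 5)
Your proof is correct and follows essentially the same route as the paper: both prove the nontrivial direction by rescaling $x$ and $y$ to a common level set of $f$ via homogeneity, applying log-concavity to the resulting convex combination of level-set points, and then rehomogenizing. Your normalization to $\{f=1\}$ with the scalar $w$ pulled out front is notationally a bit cleaner than the paper's choice of rescaling directly to the level $M^s$, but the decomposition, the weights $\mu = \tau u/w$, and the use of multiplicative log-concavity are identical.
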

\begin{proof}
One direction is trivial. If $f(x)^\frac{1}{s}$ is concave, then $\ln f(x) = s\ln(f(x)^\frac{1}{s})$ is concave since $\ln(\cdot)$ is monotone and concave on $(0,+\infty)$.

Conversely, if $\ln f(x)$ is concave, then $f(\tau x+(1-\tau)y)\geq f(x)^\tau f(y)^{1-\tau}$, for any $x,y\in (0,+\infty),\tau\in[0,1]$. Now for any fixed $x,y\in (0,+\infty),\tau\in[0,1]$, we define $M = \tau f(x)^\frac{1}{s}+(1-\tau)f(y)^\frac{1}{s}$. We then have
\begin{align*}
f(\tau x+(1-\tau)y)^\frac{1}{s} =&\ f\left(\frac{\tau f(x)^\frac{1}{s}}{M}\frac{Mx}{f(x)^\frac{1}{s}}+\frac{(1-\tau)f(y)^\frac{1}{s}}{M}\frac{My}{f(y)^\frac{1}{s}}\right)^\frac{1}{s}\\
\geq&\  f\left(\frac{Mx}{f(x)^\frac{1}{s}}\right)^{\frac{\tau f(x)^\frac{1}{s}}{M}\cdot\frac{1}{s}}f\left(\frac{My}{f(y)^\frac{1}{s}}\right)^{\frac{(1-\tau)f(y)^\frac{1}{s}}{M}\cdot\frac{1}{s}}\\
=&\ (M^s)^{\frac{\tau f(x)^\frac{1}{s}}{M}\cdot\frac{1}{s}+\frac{(1-\tau)f(y)^\frac{1}{s}}{M}\cdot\frac{1}{s}} \\
=&\ M.
\end{align*}
Therefore $f(x)^\frac{1}{s}$ is concave.
\end{proof}

Let $x=(x_1,x_2)\in(0,+\infty)^2$. Define $f(x) = \trace_k \big[\exp\big(H+\ln (x_1A_1+x_2A_2)\big)\big]$. One can easily verify that $f_{H,k}(A)$ being concave on $\mathcal{H}_n^{++}$ is equivalent to $f(x)^\frac{1}{k}$ being concave on $(0,+\infty)^2$ for arbitrary but fixed choice of $A_1,A_2\in\mathcal{H}_n^{++},H\in\mathcal{H}_n$. Similarly, $\tilde{f}_{H,k}(A)$ being concave on $\mathcal{H}_n^{++}$ is equivalent to $\ln f(x)$ being concave on $(0,+\infty)^2$ for arbitrary but fixed choice of $A_1,A_2\in\mathcal{H}_n^{++},H\in\mathcal{H}_n$. Using the definition of the $k$-trace $\trace_k$, it is easy to check that $f(x)$ is homogeneous of order $k$. By \Cref{lem:homogeneous}, we know $f(x)^\frac{1}{k}$ is concave if and only if $\ln f(x)$ is concave. Therefore we have (i) $\Leftrightarrow$ (ii). 
\end{proof}

We close this section with a conjecture. Epstein \cite{epstein1973remarks} proved that the function 
\begin{equation}
A\longmapsto \trace\big[(B^*A^pB)^\frac{1}{p}\big]
\end{equation}
is concave on $\mathcal{H}_n^+$ for any $0\leq q\leq 1$ and any $B\in\mathbb{C}^{n\times n}$. If we choose $p=\frac{1}{m}$, $B = \exp(\frac{1}{2m}H)$ for some $H\in\mathcal{H}_n$, then taking $m\rightarrow+\infty$ and using the Lie product formula ($\lim_{m\rightarrow+\infty}(\exp(\frac{1}{m}X)\exp(\frac{1}{m}Y))^m=\exp(X+Y)$), we immediately obtain the Lieb's theorem that $A\mapsto \trace\big[\exp(H+\ln A)\big]$ is concave on $\mathcal{H}_n^{++}$. In fact, Carlen \cite{carlen2010trace} showed that Epstein's result, and its generalized version, can be derived from the Lieb's concavity theorem (Theorem 1 \cite{LIEB1973267}). But we know that the Lieb's theorem is equivalent to the Lieb's concavity theorem (see \cite{LIEB1973267}), and hence is equivalent to Epstein's result. Therefore, it is reasonable to make the conjecture that the function 
\begin{equation}
A\longmapsto \big(\trace_k\big[(B^*A^pB)^\frac{1}{p}\big]\big)^\frac{1}{k}
\end{equation}
is concave on $\mathcal{H}_n^+$. We shall discuss this in future works.

\section{Application to a sum of random matrices}
\label{sec:Application}
In many problems, the assemble of a large complicated matrix is by sampling independent random matrices with simpler structures. To study the spectrum of the expected matrix by only evaluating the spectrum of the sample mean, we need to know how the latter deviates from the former. Therefore, we often need to estimate the spectrum of random matrices of the form $Y=\sum_{i=1}^mX^{(i)}$, where $X^{(i)},1\leq i\leq m$ are independent random matrices of the same size. In particular, we consider the Hermitian case where $X^{(i)}\in\mathcal{H}_n$. An important tool to study the extreme eigenvalues of a sum of random matrices is the following master bounds by Tropp (Theorem 3.6.1 \cite{MAL-048}). Consider a finite sequence of independent, random matrices $\{X^{(i)}\}_{i=1}^m\subset\mathcal{H}_n$. Then
\begin{subequations}
\begin{align}
&\mathbb{E}\lambda_{\max}\Big(\sum_{i=1}^mX^{(i)}\Big) \leq \inf_{\theta>0}\ \frac{1}{\theta}\ln \trace\big[\exp\big(\sum_{i=1}^m\ln\mathbb{E}\exp(\theta X^{(i)})\big)\big],\\
&\mathbb{E}\lambda_{\min}\Big(\sum_{i=1}^mX^{(i)}\Big) \geq \sup_{\theta<0}\ \frac{1}{\theta}\ln \trace\big[\exp\big(\sum_{i=1}^m\ln\mathbb{E}\exp(\theta X^{(i)})\big)\big].
\end{align}
\label{eqt:TroppMasterExp}
\end{subequations}
Furthermore, for all $t\in \mathbb{R}$,
\begin{subequations}
\begin{align}
&\mathbb{P}\left\{\lambda_{\max}\Big(\sum_{i=1}^mX^{(i)}\Big) \geq  t\right\} \leq \inf_{\theta>0}\ e^{-\theta t}\trace\big[\exp\big(\sum_{i=1}^m\ln\mathbb{E}\exp(\theta X^{(i)})\big)\big],\\
& \mathbb{P}\left\{\lambda_{\min}\Big(\sum_{i=1}^mX^{(i)}\Big) \leq t\right\}\leq \inf_{\theta<0}\ e^{-\theta t}\trace\big[\exp\big(\sum_{i=1}^m\ln\mathbb{E}\exp(\theta X^{(i)})\big)\big].
\end{align}
\label{eqt:TroppMasterTail}
\end{subequations}
Tropp's proof of the master bounds rely on a critical use of the Lieb's theorem. To be specific, Tropp used the concavity of $A\mapsto \trace\big[\exp(H+\ln A)\big]$ to prove the subadditivity of matrix cumulant generating function. For the sequence $\{X^{(i)}\}_{i=1}^m\subset\mathcal{H}_n$ under the same setting, 
\begin{equation}
\mathbb{E}\trace\big[\exp\big(\sum_{i=1}^mX^{(i)}\big)\big]\leq \trace\big[\exp\big(\sum_{i=1}^m\ln\mathbb{E}\exp X^{(i)}\big)\big].
\label{eqt:subadditivity}
\end{equation}

Similarly, to prove \Cref{thm:ksummasterbound}, we need to extend \eqref{eqt:subadditivity} to the following lemma using our generalized Lieb's theorem.

\begin{lemma}
\label{lem:MultiConcave}
Let $A^{(1)},A^{(2)},\cdots,A^{(m)}\in \mathcal{H}_n^{++}$ be $m$ independent, random, positive definite matrices. Then we have for any $1\leq k\leq n$,
\begin{subequations}
\begin{align}
&\mathbb{E}\big(\trace_k\big[\exp(\sum_{i=1}^m\ln A^{(i)})\big]\big)^\frac{1}{k}\leq \big(\trace_k\big[\exp(\sum_{i=1}^m\ln \mathbb{E} A^{(i)})\big]\big)^\frac{1}{k},
\label{eqt:multiconcave1}\\
&\mathbb{E}\ln\trace_k\big[\exp(\sum_{i=1}^m\ln A^{(i)})\big]\leq \ln\trace_k\big[\exp(\sum_{i=1}^m\ln \mathbb{E} A^{(i)})\big].
\label{eqt:multiconcave2}
\end{align}
\end{subequations}
\end{lemma}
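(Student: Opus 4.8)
The plan is to derive both inequalities from a single observation: each of them is an instance of Jensen's inequality for a concave real-valued function, where the required concavity is \emph{exactly} what \Cref{thm:GeneralizedLiebThm} supplies, and then to peel off the random matrices one at a time using independence. No new analytic content beyond the generalized Lieb's theorem is needed; the remaining work is bookkeeping with conditional expectations.

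First I would record the one-variable version. Fix $H\in\mathcal{H}_n$ and set $\phi(A)=\big(\trace_k[\exp(H+\ln A)]\big)^{1/k}$, which by \Cref{thm:GeneralizedLiebThm} is concave, and (being a composition of continuous maps) continuous, on the open convex cone $\mathcal{H}_n^{++}$. If $A\in\mathcal{H}_n^{++}$ is a random matrix with $\mathbb{E}A\in\mathcal{H}_n^{++}$ — which holds whenever the entrywise expectation is finite, since $v^*(\mathbb{E}A)v=\mathbb{E}(v^*Av)>0$ for every nonzero $v$ — then a supporting-hyperplane argument at the interior point $\mathbb{E}A$ yields the matrix Jensen inequality
\[
\mathbb{E}\,\phi(A)\ \le\ \phi(\mathbb{E}A),
\qquad\text{i.e.}\qquad
\mathbb{E}\big(\trace_k[\exp(H+\ln A)]\big)^{1/k}\ \le\ \big(\trace_k[\exp(H+\ln \mathbb{E}A)]\big)^{1/k}.
\]
Here one uses that $\phi$ is finite and continuous near $\mathbb{E}A$, so there is a supporting affine functional $\ell\ge\phi$ with $\ell(\mathbb{E}A)=\phi(\mathbb{E}A)$; then $\mathbb{E}\phi(A)\le\mathbb{E}\ell(A)=\ell(\mathbb{E}A)=\phi(\mathbb{E}A)$.

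Next I would iterate this over $i=1,\dots,m$. Conditioning on $A^{(2)},\dots,A^{(m)}$, the matrix $H_1:=\sum_{i=2}^m\ln A^{(i)}$ is a fixed element of $\mathcal{H}_n$, and applying the displayed inequality to the random matrix $A^{(1)}$ (which is independent of the conditioning) and then taking the outer expectation gives
\[
\mathbb{E}\big(\trace_k[\exp(\textstyle\sum_{i=1}^m\ln A^{(i)})]\big)^{1/k}
\ \le\
\mathbb{E}\big(\trace_k[\exp(\ln\mathbb{E}A^{(1)}+\textstyle\sum_{i=2}^m\ln A^{(i)})]\big)^{1/k}.
\]
Repeating with $A^{(2)}$ and $H_2:=\ln\mathbb{E}A^{(1)}+\sum_{i=3}^m\ln A^{(i)}$, and so on, after $m$ steps every $A^{(i)}$ inside the exponential has been replaced by $\mathbb{E}A^{(i)}$, which is \eqref{eqt:multiconcave1}. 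For \eqref{eqt:multiconcave2} one can either run the identical argument with $\phi$ replaced by $A\mapsto\ln\trace_k[\exp(H+\ln A)]$, concave by the second half of \Cref{thm:GeneralizedLiebThm}, or simply observe that $\mathbb{E}\ln\trace_k[\,\cdot\,]=k\,\mathbb{E}\ln\big(\trace_k[\,\cdot\,]^{1/k}\big)\le k\ln\mathbb{E}\big(\trace_k[\,\cdot\,]^{1/k}\big)$ by concavity of $\ln$ and then invoke \eqref{eqt:multiconcave1}.

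The only genuine subtlety — and the one place some care is warranted — is the matrix Jensen step at each stage: one must ensure that all the intermediate expectations are finite so that the tower property and the conditioning are legitimate, and that each $\mathbb{E}A^{(i)}$ lies in the \emph{open} cone $\mathcal{H}_n^{++}$ rather than on its boundary, where $\phi$ is not defined. Both are immediate under the standing assumption that the $A^{(i)}$ are integrable positive definite matrices, so I do not anticipate a real obstacle; the substance of the lemma is entirely carried by \Cref{thm:GeneralizedLiebThm}.
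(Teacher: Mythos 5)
Your proof is correct and follows essentially the same route as the paper's: apply Jensen's inequality (justified by the concavity in \Cref{thm:GeneralizedLiebThm}) to one $A^{(i)}$ at a time, using independence to split the expectation into iterated conditional expectations, and note that the remaining $\sum\ln A^{(j)}$ terms simply play the role of the fixed $H$. Your extra care about the supporting-hyperplane form of matrix Jensen and about $\mathbb{E}A^{(i)}$ lying in the open cone, and your shortcut derivation of \eqref{eqt:multiconcave2} from \eqref{eqt:multiconcave1} via concavity and monotonicity of $\ln$, are sound refinements but not a different argument.
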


\begin{proof}
We here only prove \eqref{eqt:multiconcave1}. The proof for \eqref{eqt:multiconcave2} is similar. Since each random matrix $A^{(i)}$ always lies in $\mathcal{H}_n^{++}$, we may apply \Cref{thm:GeneralizedLiebThm} to get a Jensen's inequality 
\[\mathbb{E}\big(\trace_k\big[\exp(H+\ln A^{(i)})\big]\big)^\frac{1}{k}\leq \big(\trace_k\big[\exp(H+\ln \mathbb{E} A^{(i)})\big]\big)^\frac{1}{k},\]
for arbitrary $H\in \mathcal{H}_n$. And since $A^{(1)},A^{(2)},\cdots,A^{(m)}$ are independent, we can split the expectation $\mathbb{E}$ into $\mathbb{E}_1\mathbb{E}_2\cdots\mathbb{E}_m$, where $\mathbb{E}_i$ is the expectation operator with respect to the random matrix $A^{(i)}$. So then we may apply \Cref{thm:GeneralizedLiebThm} repeatedly to get 
\begin{align*}
&\ \mathbb{E}\big(\trace_k\big[\exp(\sum_{i=1}^m\ln A^{(i)})\big]\big)^\frac{1}{k}\\
=&\ \mathbb{E}_1\cdots\mathbb{E}_m \big(\trace_k\big[\exp(\sum_{i=1}^{m-1}\ln A^{(i)}+\ln A^{(m)})\big]\big)^\frac{1}{k}\\
\leq&\ \mathbb{E}_1\cdots\mathbb{E}_{m-1} \big(\trace_k\big[\exp(\sum_{i=1}^{m-1}\ln A^{(i)}+\ln \mathbb{E}_mA^{(m)})\big]\big)^\frac{1}{k}\\
&\ \cdots\\
\leq&\ \big(\trace_k\big[\exp(\sum_{i=1}^{m}\ln \mathbb{E}A^{(i)})\big]\big)^\frac{1}{k}.
\end{align*}
\end{proof}

Our proof of \Cref{thm:ksummasterbound} will basically follow Tropp's proof of Theorem 3.6.1 in \cite{MAL-048}, but with the normal trace $\trace$ replaced by the general $k$-trace $\trace_k$ for $1\leq k\leq n$.

\begin{proof}[\rm\textbf{Proof of \Cref{thm:ksummasterbound}}]
We first prove \eqref{eqt:masterexpect1} and \eqref{eqt:mastertail1}. The first inequality in \eqref{eqt:masterexpect1} is trivial, because the sum of a Hermitian matrix's $k$ largest eigenvalues is a convex function of the matrix itself. Indeed we have
\[\mathbb{E}\sum_{i=1}^k\lambda_i(Y) = \mathbb{E}\ \sup_{\begin{subarray}{c}Q\in \mathbb{C}^{n\times k}\\
Q^*Q=I_k\end{subarray}} \trace[Q^*YQ] \geq  \sup_{\begin{subarray}{c}Q\in \mathbb{C}^{n\times k}\\
Q^*Q=I_k\end{subarray}} \trace[Q^*(\mathbb{E}Y)Q] = \sum_{i=1}^k\lambda_i(\mathbb{E}Y).\]
For the second inequality in \eqref{eqt:masterexpect1}, we apply a similar technique, the matrix Laplace transform, as in \cite{MAL-048}. The difference is that in the first step, we don't switch the expectation operator and the logarithm. For any $\theta>0$, we have
\begin{equation*}
\mathbb{E}\sum_{i=1}^k\lambda_i(Y)= \frac{1}{\theta}\mathbb{E}\ln\exp\big(\sum_{i=1}^k\lambda_i(\theta Y)\big)=\frac{1}{\theta}\mathbb{E}\ln \big(\prod_{i=1}^k\lambda_i\big(\exp (\theta Y)\big)\big) \leq \frac{1}{\theta}\mathbb{E}\ln \trace_k\big[\exp (\theta Y)\big].
\end{equation*}
We have used the fact that $\prod_{i=1}^k\lambda_i(A)\leq\trace_k\big[A\big]$ for any $A\in\mathcal{H}_n^+$. Next we define the random matrices $A^{(i)} = \exp (\theta X^{(i)})\in\mathcal{H}_n^{++},1\leq i\leq m$. Since $X^{(i)},1\leq i\leq m$, are independent, $A^{(i)},1\leq i\leq m$ are also independent. Therefore we may apply inequality \eqref{eqt:multiconcave2} in \Cref{lem:MultiConcave} to get 
\begin{align*}
\mathbb{E}\ln \trace_k\big[\exp (\theta Y)\big]=&\ \mathbb{E}\ln \trace_k\big[\exp \big(\sum_{i=1}^m\theta X^{(i)}\big)\big] = \mathbb{E}\ln \trace_k\big[\exp \big(\sum_{i=1}^m\ln A^{(i)}\big)\big]\\
\leq&\ \ln \trace_k\big[\exp \big(\sum_{i=1}^m\ln \mathbb{E}A^{(i)}\big)\big] = \ln \trace_k\big[\exp \big(\sum_{i=1}^m\ln \mathbb{E}\exp(\theta X^{(i)})\big)\big].
\end{align*}
Since $\theta>0$ is arbitrary, we thus have 
\[\mathbb{E}\sum_{i=1}^k\lambda_i(Y)\leq \inf_{\theta>0}\ \frac{1}{\theta} \ln \trace_k\big[\exp \big(\sum_{i=1}^m\ln \mathbb{E}\exp(\theta X^{(i)})\big)\big]. \]
The proof of \eqref{eqt:mastertail1} shares a similar spirit, except that we use \eqref{eqt:multiconcave1} instead of \eqref{eqt:multiconcave2}. For any $t\in \mathbb{R},\theta>0$, we use the Markov's inequality to obtain
\begin{align*}
\mathbb{P}\left\{\sum_{i=1}^k\lambda_i(Y)\geq t\right\} =&\ \mathbb{P}\left\{\exp\Big(\frac{\theta}{k}\sum_{i=1}^k\lambda_i(Y)\Big)\geq e^\frac{\theta t}{k}\right\}\\
\leq&\ e^{-\frac{\theta t}{k}} \mathbb{E}\exp\Big(\frac{1}{k}\sum_{i=1}^k\lambda_i(\theta Y)\Big) \leq e^{-\frac{\theta t}{k}} \mathbb{E}\Big[\big(\trace_k\exp(\theta Y)\big)^\frac{1}{k}\Big].
\end{align*} 
Then again by defining $A^{(i)} = \exp (\theta X^{(i)})\in\mathcal{H}_n^{++},1\leq i\leq m$, we may apply inequality \eqref{eqt:multiconcave1} in \Cref{lem:MultiConcave} to obtain 
\begin{align*}
\mathbb{E}\Big[\big(\trace_k\exp(\theta Y)\big)^\frac{1}{k}\Big] \leq&\ \Big(\trace_k\exp\big(\sum_{i=1}^m\ln\mathbb{E}\exp(\theta X^{(i)})\big)\Big)^\frac{1}{k}
\end{align*}
Since $\theta>0$ is arbitrary, we thus have
\[\mathbb{P}\left\{\sum_{i=1}^k\lambda_i(Y)\geq t\right\}\leq \inf_{\theta>0}\  e^{-\frac{\theta t}{k}}\Big(\trace_k\exp\big(\sum_{i=1}^m\ln\mathbb{E}\exp(\theta X^{(i)})\big)\Big)^\frac{1}{k}.\]

We proceed to \eqref{eqt:masterexpect2} and \eqref{eqt:mastertail2}. The first inequality \eqref{eqt:masterexpect2} can be similarly verified by noticing that the sum of a Hermitian matrix's $k$ smallest eigenvalues is a concave function of the matrix itself. For the second inequality in \eqref{eqt:masterexpect2} and inequality \eqref{eqt:mastertail2}, we only need to consider arbitrary $\theta<0$, and use the fact that $\theta\lambda_{n-i+1}(A)=\lambda_i(\theta A)$ for any $A\in \mathcal{H}_n$. Then repeating the arguments for \eqref{eqt:masterexpect1} and \eqref{eqt:mastertail1}, we can similarly show that 
\[\mathbb{E}\sum_{i=1}^k\lambda_{n-i+1}(Y)\leq \sup_{\theta<0}\ \frac{1}{\theta} \ln \trace_k\big[\exp \big(\sum_{i=1}^m\ln \mathbb{E}\exp(\theta X^{(i)})\big)\big], \]
and 
\[\mathbb{P}\left\{\sum_{i=1}^k\lambda_{n-i+1}(Y)\leq t\right\}\leq \inf_{\theta<0}\  e^{-\frac{\theta t}{k}}\Big(\trace_k\exp\big(\sum_{i=1}^m\ln\mathbb{E}\exp(\theta X^{(i)})\big)\Big)^\frac{1}{k}.\]
\end{proof}

We next consider a more specific case of random matrices taking the form $Y=\sum_{i=1}^mX^{(i)}$. In particular, we assume that each $X^{(i)}\in\mathcal{H}_n$ satisfies $0\leq\lambda_{\min}(X^{(i)})\leq \lambda_{\max}(X^{(i)})\leq c$ for some constant $c\geq0$. One of the most studied scenarios in this setting arises with an undirected, no-selfloop, randomly weighted graph $G=(V,E,W)$ of $n$ vertices. All the weights $w_{ij}$ for all edges $e_{ij},i<j$ are uniformly bounded and follow independent distributions. Then the Laplacian of such random graph is given by $L=\sum_{1\leq i<j\leq n}w_{ij}X^{(i,j)}$, where  
\[X^{(i,j)}=
\begin{array}{cc}
 &  \begin{array}{ccccc}
 	& i & & j & 
 	\end{array}\\
 	\begin{array}{c}
 	  i \\ \\ j \\ 
 	\end{array} & 
 	\left[\begin{array}{ccccc}
	& & & & \\
	& 1 & & -1 & \\
	& & & & \\
	& -1 & & 1 & \\
	& & & & 
	\end{array}\right]
\end{array},\quad i<j,
\]
is the sub-Laplacian corresponding to the edge $e_{ij}$ with unit weight. In particular, if each weight follows a Bernoulli distribution $B(1,p)$ for some uniform constant $p\in[0,1]$, the random graph is known as the famous Erd\H{o}s-R\'enyi model.

For these kind of problems, one may want to study how the eigenvalues of $Y=\sum_{i=1}^mX^{(i)}$ deviate from the corresponding eigenvalues of $\mathbb{E}Y$. For such purposes, Tropp \cite{MAL-048} used the master bounds \eqref{eqt:TroppMasterTail} and \eqref{eqt:TroppMasterExp}, and delicate bounds for the matrix moment generating function (Lemma 5.4.1 in \cite{MAL-048}) to prove the following expectation estimates 
\begin{subequations}
\label{eqt:TroopChernoffExp}
\begin{align}
&\lambda_{\max}(\mathbb{E}Y)\leq \mathbb{E}\lambda_{\max}(Y)\leq \inf_{\theta>0}\ \frac{e^\theta-1}{\theta}\lambda_{\max}(\mathbb{E}Y)+\frac{c}{\theta}\ln n,\\
&\lambda_{\min}(\mathbb{E}Y)\geq \mathbb{E}\lambda_{\min}(Y) \geq \sup_{\theta>0}\ \frac{1-e^{-\theta}}{\theta}\lambda_{\min}(\mathbb{E}Y)- \frac{c}{\theta}\ln n,
\end{align}
\end{subequations}
and Chernoff-type tail bounds 
\begin{subequations}
\label{eqt:TroopChernoffTail}
\begin{align}
&\mathbb{P}\left\{\lambda_{\max}(Y)\geq (1+\varepsilon)\lambda_{\max}(\mathbb{E}Y)\right\} \leq n\left(\frac{e^\varepsilon}{(1+\varepsilon)^{1+\varepsilon}}\right)^{\lambda_{\max}(\mathbb{E}Y)/c},\quad \varepsilon\geq 0,\\
&\mathbb{P}\left\{\lambda_{\min}(Y)\leq (1-\varepsilon)\lambda_{\min}(\mathbb{E}Y)\right\} \leq n\left(\frac{e^{-\varepsilon}}{(1-\varepsilon)^{1-\varepsilon}}\right)^{\lambda_{\min}(\mathbb{E}Y)/c},\quad \varepsilon\in[0,1),
\end{align}
\end{subequations}
for the largest and the smallest eigenvalues of $Y$ and $\mathbb{E}Y$. With \Cref{thm:ksummasterbound}, we shall extend Tropp's results to the following analog theorem.

\begin{thm} 
\label{thm:ksumChernoff}
Given any finite sequence of independent, random matrices $\{X^{(i)}\}_{i=1}^m\subset\mathcal{H}_n$, let $Y=\sum_{i=1}^mX^{(i)}$. Assume that for each $i$, $0\leq \lambda_n(X^{(i)})\leq \lambda_1(X^{(i)})\leq c$ for some uniform constants $c\geq 0$. Then for any $1\leq k\leq n$, we have expectation estimates
\begin{subequations}
\begin{align}
&\mathbb{E}\sum_{i=1}^k\lambda_i(Y) \leq \inf_{\theta>0}\ \frac{e^\theta-1}{\theta}\sum_{i=1}^k\lambda_i(\mathbb{E}Y)+\frac{c}{\theta}\ln\binom{n}{k},
\label{eqt:chernoffexpect1}\\
&\mathbb{E}\sum_{i=1}^k\lambda_{n-i+1}(Y) \geq \sup_{\theta>0}\ \frac{1-e^{-\theta}}{\theta}\sum_{i=1}^k\lambda_{n-i+1}(\mathbb{E}Y)- \frac{c}{\theta}\ln\binom{n}{k},
\label{eqt:chernoffexpect2}
\end{align}
\label{eqt:chernoffexpect}
\end{subequations}
and tail bounds 
\begin{subequations}
\begin{align}
&\mathbb{P}\left\{\sum_{i=1}^k\lambda_i(Y)\right. \geq  \left.(1+\varepsilon)\sum_{i=1}^k\lambda_i(\mathbb{E}Y)\right\}
\label{eqt:chernofftail1}\\
 & \qquad\qquad\qquad \leq \binom{n}{k}^\frac{1}{k}\left(\frac{e^\varepsilon}{(1+\varepsilon)^{1+\varepsilon}}\right)^{\frac{1}{ck}\sum_{i=1}^k\lambda_i(\mathbb{E}Y)}, \quad \varepsilon \geq 0,\nonumber
\\
& \mathbb{P}\left\{\sum_{i=1}^k\lambda_{n-i+1}(Y)\right. \leq \left.(1-\varepsilon)\sum_{i=1}^k\lambda_{n-i+1}(\mathbb{E}Y)\right\}
\label{eqt:chernofftail2}\\
& \qquad\qquad\qquad \leq \binom{n}{k}^\frac{1}{k}\left(\frac{e^{-\varepsilon}}{(1-\varepsilon)^{1-\varepsilon}}\right)^{\frac{1}{ck}\sum_{i=1}^k\lambda_{n-i+1}(\mathbb{E}Y)},\quad \varepsilon\in[0,1).\nonumber
\end{align}
\label{eqt:chernofftail}
\end{subequations}
\end{thm}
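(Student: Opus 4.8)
The plan is to imitate Tropp's derivation of the eigenvalue Chernoff bounds (Theorem 5.1.1 in \cite{MAL-048}) from his master bounds, but feeding in the $k$-sum master bounds of \Cref{thm:ksummasterbound} in place of the $k=1$ versions. The only genuinely new ingredients are a semidefinite bound on the matrix moment generating function of each $X^{(i)}$ and the elementary estimate $\trace_k[\exp(sM)]\leq\binom{n}{k}\exp(s\sum_{i=1}^k\lambda_i(M))$ for $s>0$; everything else is bookkeeping and one-variable optimization.

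First I would record the scalar inequality $e^{\theta x}\leq 1+\tfrac{e^{\theta c}-1}{c}x$, valid for all $x\in[0,c]$ and all real $\theta$ by convexity of $x\mapsto e^{\theta x}$ (the chord and the exponential agree at $x=0$ and $x=c$). Since $0\preceq X^{(i)}\preceq cI_n$, applying this inequality spectrally gives $\exp(\theta X^{(i)})\preceq I_n+\tfrac{e^{\theta c}-1}{c}X^{(i)}$, and taking expectations $\mathbb{E}\exp(\theta X^{(i)})\preceq I_n+\tfrac{e^{\theta c}-1}{c}\mathbb{E}X^{(i)}$, the right-hand side being positive definite for both signs of $\theta$ (using $0\preceq\mathbb{E}X^{(i)}\preceq cI_n$ when $\theta<0$). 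Now $\ln(\cdot)$ is operator monotone on positive definite matrices, and $\ln(1+u)\leq u$ for $u>-1$, so $\ln\mathbb{E}\exp(\theta X^{(i)})\preceq\tfrac{e^{\theta c}-1}{c}\mathbb{E}X^{(i)}$; summing over $i$ and using $\mathbb{E}Y=\sum_i\mathbb{E}X^{(i)}$ yields $\sum_{i=1}^m\ln\mathbb{E}\exp(\theta X^{(i)})\preceq\tfrac{e^{\theta c}-1}{c}\mathbb{E}Y$. Next I would note that $\trace_k\circ\exp$ is monotone for the semidefinite order: if $A\preceq B$ then $\lambda_j(A)\leq\lambda_j(B)$ for every $j$ by Weyl's monotonicity theorem, hence $\exp(\lambda_j(A))\leq\exp(\lambda_j(B))$, and since the $k$th elementary symmetric polynomial is increasing in each argument on the positive orthant, $\trace_k[\exp A]\leq\trace_k[\exp B]$. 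Therefore $\trace_k[\exp(\sum_i\ln\mathbb{E}\exp(\theta X^{(i)}))]\leq\trace_k[\exp(\tfrac{e^{\theta c}-1}{c}\mathbb{E}Y)]$.

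For $\theta>0$ the factor $s:=\tfrac{e^{\theta c}-1}{c}$ is positive, and since $\trace_k[\exp(sM)]=\sum_{i_1<\cdots<i_k}\exp(s(\lambda_{i_1}(M)+\cdots+\lambda_{i_k}(M)))$ has each of its $\binom{n}{k}$ summands bounded by $\exp(s\sum_{j=1}^k\lambda_j(M))$, we get $\trace_k[\exp(sM)]\leq\binom{n}{k}\exp(s\sum_{j=1}^k\lambda_j(M))$; for $\theta<0$ the analogous estimate bounds each summand by $\exp(s\sum_{i=1}^k\lambda_{n-i+1}(M))$. Substituting into the master bounds of \Cref{thm:ksummasterbound} and optimizing over $\theta$ then finishes the proof. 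For \eqref{eqt:chernofftail1}, starting from \eqref{eqt:mastertail1} with $t=(1+\varepsilon)\sum_{i=1}^k\lambda_i(\mathbb{E}Y)$, the exponent to be minimized is $-\tfrac{\theta t}{k}+\tfrac{1}{k}\ln\binom{n}{k}+\tfrac{e^{\theta c}-1}{ck}\sum_{i=1}^k\lambda_i(\mathbb{E}Y)$; setting the $\theta$-derivative to zero gives $e^{\theta c}=1+\varepsilon$, and plugging back produces exactly $\binom{n}{k}^{1/k}(e^\varepsilon/(1+\varepsilon)^{1+\varepsilon})^{\frac{1}{ck}\sum_{i=1}^k\lambda_i(\mathbb{E}Y)}$. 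For \eqref{eqt:chernoffexpect1}, \eqref{eqt:mastertail1} is replaced by \eqref{eqt:masterexpect1}, and the rescaling $\psi=\theta c$ turns $\tfrac{1}{\theta}\ln\binom{n}{k}+\tfrac{e^{\theta c}-1}{\theta c}\sum_i\lambda_i(\mathbb{E}Y)$ into $\tfrac{c}{\psi}\ln\binom{n}{k}+\tfrac{e^{\psi}-1}{\psi}\sum_i\lambda_i(\mathbb{E}Y)$, whose infimum over $\psi>0$ is the claim. The lower-tail statements \eqref{eqt:chernofftail2} and \eqref{eqt:chernoffexpect2} follow in the same way from \eqref{eqt:mastertail2} and \eqref{eqt:masterexpect2} with $\theta<0$, using $\theta\lambda_{n-i+1}(A)=\lambda_i(\theta A)$; the optimal choice there is $\theta=\tfrac{1}{c}\ln(1-\varepsilon)$, which is negative precisely when $\varepsilon\in[0,1)$, matching the stated range.

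The main obstacle is not a single deep inequality but rather executing the semidefinite moment-generating-function bound cleanly — in particular making sure the operator monotonicity of $\ln$ and the spectral transfer of the two scalar inequalities are invoked on genuinely positive definite arguments — and keeping the two sign conventions and the rescaling $\psi=\theta c$ straight so that the constant $\binom{n}{k}$, rather than Tropp's $n$, appears in exactly the right places. If desired, the dependence on $c$ can be removed at the outset by rescaling $X^{(i)}\mapsto X^{(i)}/c$, which reduces everything to the case $c=1$.
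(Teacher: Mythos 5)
Your proposal is correct and follows essentially the same route as the paper: bound the matrix cumulant generating function by $\tfrac{e^{\theta c}-1}{c}\,\mathbb{E}Y$ in the semidefinite order, pass through the monotonicity of $\trace_k\circ\exp$, bound $\trace_k[\exp(sM)]$ by $\binom{n}{k}\exp\bigl(s\sum_{j=1}^k\lambda_j(M)\bigr)$ (with the reversal $\lambda_j\mapsto\lambda_{n-j+1}$ when $s<0$), and then optimize $\theta$. The only difference is cosmetic: the paper simply cites Tropp's Lemma 5.4.1 for the moment-generating-function estimate, while you re-derive it via the chord bound on $e^{\theta x}$ and the operator monotonicity of $\ln$, and you make explicit (via Weyl's monotonicity theorem and the monotonicity of elementary symmetric polynomials on the positive orthant) the step $\trace_k[\exp A]\le\trace_k[\exp B]$ for $A\preceq B$, which the paper uses without comment.
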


\begin{proof}
Since $0\leq \lambda_n(X^{(i)})\leq \lambda_1(X^{(i)})\leq c$, we can use lemma 5.4.1 in \cite{MAL-048} to obtain the estimate 
\[\ln \mathbb{E}\exp(\theta X^{(i)})\leq \frac{e^{\theta c}-1}{c}\mathbb{E}X^{(i)}=g(\theta)\mathbb{E}X^{(i)},\quad \theta\in \mathbb{R},\]
where $g(\theta) = \frac{e^{\theta c}-1}{c}$. So we have 
\begin{align*}
\trace_k\big[\exp \big(\sum_{i=1}^m\ln \mathbb{E}\exp(\theta X^{(i)})\big)\big]\leq&\ \trace_k\big[\exp \big(g(\theta)\sum_{i=1}^m\mathbb{E}X^{(i)}\big)\big] = \trace_k\big[\exp \big(g(\theta)\mathbb{E}Y\big)\big]\\
\leq&\ \binom{n}{k}\prod_{i=1}^k\lambda_i\big(\exp \big(g(\theta)\mathbb{E}Y\big)\big) = \binom{n}{k}\exp\big(\sum_{i=1}^k\lambda_i\big(g(\theta)\mathbb{E}Y\big)\big).
\end{align*}
We have used that fact that $\trace_k\big[A\big]\leq\binom{n}{k}\prod_{i=1}^k\lambda_i(A)$ for any $A\in\mathcal{H}_n^+$. Notice that for $\theta>0$, $g(\theta) = \frac{e^{\theta c}-1}{c}>0$. We then use \eqref{eqt:masterexpect1} in \Cref{thm:ksummasterbound} to get 
\[\mathbb{E}\sum_{i=1}^k\lambda_i(Y)\leq \inf_{\theta>0} \frac{1}{\theta} \ln\left(\binom{n}{k}\exp\big(\sum_{i=1}^k\lambda_i\big(g(\theta)\mathbb{E}Y\big)\big)\right)  =\inf_{\theta>0} \frac{g(\theta)}{\theta}\sum_{i=1}^k\lambda_i(\mathbb{E}Y)+\frac{1}{\theta}\ln\binom{n}{k}.\]
As mentioned in \cite{MAL-048}, this infimum does not admit a closed form. By making change of variable $\theta\rightarrow \theta/c$, we obtain \eqref{eqt:chernoffexpect1}
\[\mathbb{E}\sum_{i=1}^k\lambda_i(Y)\leq \inf_{\theta>0}\frac{e^\theta-1}{\theta}\sum_{i=1}^k\lambda_i(\mathbb{E}Y)+\frac{c}{\theta}\ln\binom{n}{k}.\]
Similarly, We apply \eqref{eqt:mastertail1} in \Cref{thm:ksummasterbound} to get
\begin{align*}\mathbb{P}\left\{\sum_{i=1}^k\lambda_i(Y) \geq  t\right\} \leq&\ \inf_{\theta>0}\ e^{-\frac{\theta t}{k}}\left(\binom{n}{k}\exp\big(\sum_{i=1}^k\lambda_i\big(g(\theta)\mathbb{E}Y\big)\big)\right)^\frac{1}{k}\\
=&\ \inf_{\theta>0}\ e^{-\frac{\theta t}{k}}\binom{n}{k}^\frac{1}{k}\exp\Big(\frac{g(\theta)}{k}\sum_{i=1}^k\lambda_i(\mathbb{E}Y)\Big).
\end{align*}
If we choose $t= (1+\varepsilon)\sum_{i=1}^k\lambda_i(\mathbb{E}Y)$ for $\varepsilon\geq0$, we have 
\begin{equation*}
\mathbb{P}\left\{\sum_{i=1}^k\lambda_i(Y)\geq (1+\varepsilon)\sum_{i=1}^k\lambda_i(\mathbb{E}Y)\right\}\leq \inf_{\theta>0}\ \binom{n}{k}^\frac{1}{k}\exp\Big(\big(g(\theta)-(1+\varepsilon)\theta\big)\frac{1}{k}\sum_{i=1}^k\lambda_i(\mathbb{E}Y)\Big).
\end{equation*}
Minimizing the right hand side with $\theta = \frac{\ln(1+\varepsilon)}{c}$ gives \eqref{eqt:chernofftail1}.

Now consider $\theta<0$, we have $g(\theta) = \frac{e^{\theta c}-1}{c}<0$. We then use \eqref{eqt:masterexpect2} in \Cref{thm:ksummasterbound} to get 
\begin{align*}
\mathbb{E}\sum_{i=1}^k\lambda_{n-i+1}(Y)\geq&\ \sup_{\theta<0} \frac{1}{\theta} \ln\left(\binom{n}{k}\exp\big(\sum_{i=1}^k\lambda_i\big(g(\theta)\mathbb{E}Y\big)\big)\right)\\  
=&\ \sup_{\theta<0} \frac{g(\theta)}{\theta}\sum_{i=1}^k\lambda_{n-i+1}(\mathbb{E}Y)+\frac{1}{\theta}\ln\binom{n}{k}.
\end{align*}
We have used $\lambda_{i}(g(\theta)\mathbb{E}Y)=g(\theta)\lambda_{n-i+1}(\mathbb{E}Y)$ when $g(\theta)<0$. By making change of variable $\theta\rightarrow-\theta/c$, we obtain \eqref{eqt:chernoffexpect2}
\[\mathbb{E}\sum_{i=1}^k\lambda_{n-i+1}(Y)\geq \sup_{\theta>0}\frac{1-e^{-\theta}}{\theta}\sum_{i=1}^k\lambda_{n-i+1}(\mathbb{E}Y)- \frac{c}{\theta}\ln\binom{n}{k}.\]
Similarly, We apply \eqref{eqt:mastertail2} in \Cref{thm:ksummasterbound} to get
\begin{align*}\mathbb{P}\left\{\sum_{i=1}^k\lambda_{n-i+1}(Y) \leq  t\right\} \leq&\ \inf_{\theta<0}\ e^{-\frac{\theta t}{k}}\left(\binom{n}{k}\exp\big(\sum_{i=1}^k\lambda_i\big(g(\theta)\mathbb{E}Y\big)\big)\right)^\frac{1}{k}\\
=&\ \inf_{\theta<0}\ e^{-\frac{\theta t}{k}}\binom{n}{k}^\frac{1}{k}\exp\Big(\frac{g(\theta)}{k}\sum_{i=1}^k\lambda_{n-i+1}(\mathbb{E}Y)\Big).
\end{align*}
If we choose $t= (1-\varepsilon)\sum_{i=1}^k\lambda_{n-i+1}(\mathbb{E}Y)$ for $\varepsilon\in[0,1)$, we have 
\begin{align*}
\mathbb{P}\left\{\sum_{i=1}^k\lambda_{n-i+1}(Y)\right.\leq & \left.(1-\varepsilon)\sum_{i=1}^k\lambda_{n-i+1}(\mathbb{E}Y)\right\}\\
\leq&\ \inf_{\theta<0}\ \binom{n}{k}^\frac{1}{k}\exp\Big(\big(g(\theta)-(1-\varepsilon)\theta\big)\frac{1}{k}\sum_{i=1}^k\lambda_{n-i+1}(\mathbb{E}Y)\Big).
\end{align*}
Minimizing the right hand sids with $\theta=\frac{\ln(1-\varepsilon)}{c}$ gives \eqref{eqt:chernofftail2}.
\end{proof}

In Tropp's results \eqref{eqt:TroopChernoffExp}, namely the case $k=1$, the cost of ``switching'' $\lambda$ and $\mathbb{E}$ is of scale $\ln n$. In our estimates \eqref{eqt:chernoffexpect}, the gap factor becomes $\ln\binom{n}{k}\leq k\ln n$ that grows only sub-linearly in $k$, which is reasonable as we are estimating the sum of the $k$ largest (or smallest) eigenvalues. We shall further compare our estimates to another related work. Tropp et al. \cite{gittens2011tail} introduced a supspace argument based on Courant--Fischer characterization of eigenvalues to prove tail bounds for all eigenvalues of $Y=\sum_{i=1}^mX^{(i)}$. Though not stated in \cite{gittens2011tail}, the following expectation estimates for all eigenvalues can also be established using the supspace argument. Give any finite sequence of independent, random matrices $\{X^{(i)}\}_{i=1}^m$ under the same assumption as in \Cref{thm:ksumChernoff}, and $Y=\sum_{i=1}^mX^{(i)}$, we have for any $1\leq k\leq n$,
\begin{subequations}
\begin{align}
&\mathbb{E}\lambda_k(Y)\leq \inf_{\theta>0}\ \frac{e^\theta-1}{\theta}\lambda_k(\mathbb{E}Y)+\frac{c}{\theta}\ln (n-k+1),\label{eqt:Troppsubspace1}\\
&\mathbb{E}\lambda_k(Y) \geq \sup_{\theta>0}\ \frac{1-e^{-\theta}}{\theta}\lambda_k(\mathbb{E}Y)- \frac{c}{\theta}\ln k.\label{eqt:Troppsubspace2}
\end{align}
\end{subequations}
Summing \eqref{eqt:Troppsubspace1} (or \eqref{eqt:Troppsubspace2}) for the $k$ largest (or smallest) eigenvalues, we immediately obtain 
\begin{subequations}
\begin{align}
&\mathbb{E}\sum_{i=1}^k\lambda_i(Y)\leq \inf_{\theta>0}\ \frac{e^\theta-1}{\theta}\sum_{i=1}^k\lambda_i(\mathbb{E}Y)+\frac{c}{\theta}\ln \prod_{i=1}^k(n-i+1),\\
&\mathbb{E}\sum_{i=1}^k\lambda_{n-i+1}(Y) \geq \sup_{\theta>0}\ \frac{1-e^{-\theta}}{\theta}\sum_{i=1}^k\lambda_{n-i+1}(\mathbb{E}Y)- \frac{c}{\theta}\ln \prod_{i=1}^k(n-i+1).
\end{align}
\end{subequations}
Therefore, our expectation estimates \eqref{eqt:chernoffexpect1} and \eqref{eqt:chernoffexpect2} are sharper for partial sums of eigenvalues, as $\ln \binom{n}{k}<\ln \prod_{i=1}^k(n-i+1)$ for $k>1$. In particular, if one choose $k$ to be a fixed porportion of $n$, then $\ln \binom{n}{k} = O(k)$, while $\ln \prod_{i=1}^k(n-i+1) = O(k\ln n)$. Our results are then better by a factor $\ln n$. 

At last, we remark that if we combine \Cref{thm:GeneralizedLiebThm} and the subspace argument in  \cite{gittens2011tail}, we shall be able to derive similar expectation estimates and tail bounds for the sum of arbitrary successive eigenvalues of $Y=\sum_{i=1}^mX^{(i)}$. We will leave this potential extension to future works. 

\section*{Acknowledgment}
The research was in part supported by the NSF Grant DMS-1613861. The author would like to thank Joel A. Tropp for providing deep insights and rich materials in theories of random matrices and multilinear algebra. 
The author also gratefully appreciates the inspiring discussions with Thomas Y. Hou, Florian Schaefer, Shumao Zhang and Ka Chun Lam during the development of this paper. The kind hospitality of the Erwin Schr\"odinger International Institute for Mathematics and Physics (ESI), where the early ideas of this work started, is gratefully acknowledged.

\section*{References}
\bibliographystyle{elsarticle-num}
\bibliography{reference}

\end{document}